\newtheorem{theorem}{Theorem}[section]
\newtheorem{lemma}[theorem]{Lemma}
\newtheorem{proposition}[theorem]{Proposition}
\newtheorem{corollary}[theorem]{Corollary}
\theoremstyle{definition}
\theoremstyle{remark}
\newtheorem{remark}[theorem]{Remark}
\numberwithin{equation}{section}
\def\ZZ{\mathbb{Z}} \def\RR{\mathbb{R}}
\def\dgamma{\overline{\gamma}}
\def\ds{\rule{0pt}{1.5ex}}
\begin{document}

\title{Domination ratio of integer distance digraphs}
\author{Jia Huang}
\address{Department of Mathematics and Statistics, University of Nebraska at Kearney, Kearney, Nebraska, USA}
\curraddr{}
\email{huangj2@unk.edu}
\thanks{
}
\keywords{Cayley graph, circulant graph, domination ratio, efficient dominating set, integer distance graph}

\begin{abstract} 
An integer distance digraph is the Cayley graph $\Gamma(\ZZ,S)$ of the additive group $\ZZ$ of all integers with respect to some finite subset $S\subseteq\ZZ$.
The domination ratio of $\Gamma(\ZZ,S)$ is the minimum density of a dominating set in $\Gamma(\ZZ,S)$.
We establish some basic results on the domination ratio of $\Gamma(\ZZ,S)$ and precisely determine it when $S=\{s,t\}$ with $s$ dividing $t$.
\end{abstract}

\maketitle

\section{Introduction}\label{sec:intro}

Let $\Gamma=(V,E)$ be a \emph{digraph}, where $V$ is a set of objects called \emph{vertices} and $E\subseteq V\times V$ is a set of ordered pairs of vertices called \emph{(directed) edges}.
An edge $(u,v)\in E$ is from a vertex $u$ to another vertex $v$. 
The digraph $\Gamma$ is said to be \emph{finite} if $V$ and $E$ are both finite, or \emph{infinite} otherwise.
If every vertex of $\Gamma$ has only finitely many incoming and outgoing edges, then $\Gamma$ is said to be \emph{locally finite}.
If $(u,v)\in E\Longleftrightarrow (v,u)\in E$ for all $u,v\in V$, then we may view $\Gamma$ as an undirected graph by replacing each pair of opposite edges $(u,v)$ and $(v,u)$ with an undirected edge between $u$ and $v$.

Given vertices $u,v\in V$, we say $u$ \emph{dominates} $v$ if either $u=v$ or $(u,v)\in E$.
A set $D\subseteq V$ is called a \emph{dominating set} of the digraph $\Gamma$ if every vertex $v\in V$ is dominated by some vertex $u\in D$.
The concept of domination has wide applications in the real world, such as resource allocation.
It is a well-known NP-complete problem in graph theory to determine the \emph{domination number} $\gamma(\Gamma)$ of a finite digraph $\Gamma$, that is, the smallest cardinality of a dominating set of $\Gamma$.
The domination number has been extensively studied, and there are also many meaningful variations of domination, such as total domination.
The reader is referred to the monograph by Haynes, Hedetniemi, and Slater~\cite{FundDom} for an overview of this field.

We study domination in certain infinite Cayley graphs in this paper.
Let $G$ be a group and $S$ a subset of $G$.
The \emph{Cayley graph} $\Gamma(G,S) = (V,E)$ is a digraph with vertex set $V = G$ and edge set $E = \{(g,gs): g\in G,\ s\in S\}$.
When $S$ is closed under taking inverses, the digraph $\Gamma(G,S)$ can be viewed as an undirected graph since $(g,h)$ is an edge if and only if $(h,g)$ is an edge.

Recently there has been some research work on the existence of an efficient dominating set in a finite Cayley graph; see, e.g., Chelvam--Mutharasu~\cite{SubgroupEffDom} and Dejter--Serra~\cite{EffDomCayley}.
An \emph{efficient dominating set}, also called a \emph{perfect code}, of a digraph $\Gamma$ is a dominating set $D$ such that every vertex of $\Gamma$ is dominated by exactly one vertex in $D$.
For a finite Cayley graph $\Gamma=\Gamma(G,S)$ with $n=|G|$ vertices, each having $d=|S|$ outgoing edges, there is a straightforward lower bound $\gamma(\Gamma)\ge n/(1+d)$, where the equality holds if and only if there exists an efficient dominating set.

A \emph{circulant (di)graph} is a finite Cayley graph $\Gamma(\ZZ_n,S)$ where $\ZZ_n$ is the finite cyclic group of integers modulo $n$ and $S$ is a subset of $\ZZ_n$.
When $-S:=\{-s:s\in S\}$ coincides with $S$, the digraph $\Gamma(\ZZ,S)$ can be viewed as an undirected graph.
Circulant graphs provide important topological structures for interconnection networks due to their symmetry, fault-tolerance, routing capabilities, and other good properties, and have been used in telecommunication networks, VLSI design, and distributed computation.
Domination in circulant graphs has been studied by 
Huang--Xu~\cite{BonEffDomVertexTrans}, Kumar--MacGillivray~\cite{EffDomCirc}, Obradovi\'c--Peters--Ru\v{z}i\'c~\cite{EffDomCircChord}, Rad~\cite{DomCirc}, and others. 
Let $\gamma(\ZZ_n,S)$ denote the domination number of $\Gamma(\ZZ_n,S)$.
The following results are known for the \emph{double loop network} $\Gamma(\ZZ_n,\{1,s\})$ with $s\in\{2,3,\ldots,n-1\}$ or $\Gamma(\ZZ_n,\{\pm1,\pm s\})$ with $s\in\{2,3,\ldots,\lceil n/2 \rceil -1\}$. 

\begin{proposition}[{Huang and Xu~\cite{BonEffDomVertexTrans}}]\label{prop:eff}
{\normalfont(i)} 
If $1< s< \lceil n/2 \rceil$ then $\lceil n/5\rceil \le \gamma(\ZZ_n,\{\pm1,\pm s\})\le \lceil n/3\rceil$
and $\Gamma(\ZZ_n,\{\pm1,\pm s\})$ has an efficient dominating set if and only if $5\mid n$ and $s\equiv \pm2\pmod 5$.

{\normalfont(ii)} If $1< s< n$ then $\lceil n/3\rceil \le \gamma(\ZZ_n,\{1,s\}) \le \lceil n/2\rceil$ and $\Gamma(\ZZ_n,\{1,s\})$ has an efficient dominating set if and only if $3\mid n$ and $s\equiv2\pmod 3$.

{\normalfont(iii)} 
If $1\le s\le n-1$ then $\gamma(\ZZ_n,\{1,2,\ldots,s\}) = \lceil n/(s+1)\rceil$. 
\end{proposition}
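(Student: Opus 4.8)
The plan is to reduce all three parts to one elementary observation: in the Cayley graph $\Gamma(\ZZ_n,S)$ a vertex $g$ dominates precisely the translate $g+B$, where $B:=\{0\}\cup S$ (in part~(i), $S=\{\pm1,\pm s\}$, so $B=\{0,\pm1,\pm s\}$). Consequently $D\subseteq\ZZ_n$ is a dominating set if and only if $D+B=\ZZ_n$, and $D$ is an efficient dominating set if and only if this sumset is \emph{direct}, i.e.\ $B$ \emph{tiles} $\ZZ_n$ with tiling complement $D$. In particular $|D|\cdot|B|\ge n$ always. Under the stated hypotheses on $s$ the listed elements of $B$ are pairwise distinct modulo $n$, so $|B|=5,3,s+1$ in parts~(i),(ii),(iii), and this already yields the lower bounds $\lceil n/5\rceil$, $\lceil n/3\rceil$, and $\lceil n/(s+1)\rceil$; it also shows $|B|\mid n$ must hold whenever an efficient dominating set exists, which is why $5\mid n$ (resp.\ $3\mid n$) appears in the ``efficient'' clauses.

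For the upper bounds I would exhibit explicit dominating sets that ignore $s$. In~(iii) take the $\lceil n/(s+1)\rceil$ equally spaced vertices $D=\{j(s+1):0\le j<\lceil n/(s+1)\rceil\}$; the arcs $j(s+1)+\{0,1,\dots,s\}$ cover $\ZZ_n$, one of them wrapping partway around when $(s+1)\nmid n$, so $\gamma(\ZZ_n,\{1,\dots,s\})\le\lceil n/(s+1)\rceil$, which with the lower bound settles~(iii). In~(i), the first $\lceil n/3\rceil$ multiples of $3$ dominate $\ZZ_n$ using only the $\pm1$ edges, since $3k+\{-1,0,1\}$ sweeps out every residue; hence $\gamma\le\lceil n/3\rceil$. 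In~(ii), the $\lceil n/2\rceil$ even residues dominate $\ZZ_n$ using only the $+1$ edge, so $\gamma\le\lceil n/2\rceil$.

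It remains to decide, in~(i) and~(ii), exactly when $B$ tiles $\ZZ_n$. The ``if'' direction is immediate: if $3\mid n$ and $s\equiv2\pmod{3}$, then $\{0,1,s\}$ is a complete system of residues modulo $3$, so $D=3\ZZ_n$ is a tiling complement; if $5\mid n$ and $s\equiv\pm2\pmod{5}$, then $\{0,\pm1,\pm s\}$ is a complete system of residues modulo $5$, so $D=5\ZZ_n$ works. For the converse I would pass to mask polynomials: with $B(x)=\sum_{b\in B}x^{b}$ and $D(x)=\sum_{d\in D}x^{d}$ (polynomials with $0/1$ coefficients), the tiling condition is $D(x)B(x)\equiv 1+x+\dots+x^{n-1}\pmod{x^{n}-1}$. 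Write $p=|B|\in\{3,5\}$, so $p\mid n$; evaluating the congruence at a primitive $p$-th root of unity $\zeta$, which is an $n$-th root of unity different from $1$, the right-hand side vanishes, so $D(\zeta)B(\zeta)=0$.

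The crux, and the step I expect to demand the most care, is an infinite-descent argument forcing $\Phi_{p}(x)\mid B(x)$. If $\Phi_{p}(x)\nmid B(x)$, then $B(\zeta)\ne0$, hence $D(\zeta)=0$, hence $\Phi_{p}(x)\mid D(x)$, which forces $p\mid|D|$ and therefore $p^{2}\mid n$; repeating with a primitive $p^{2}$-th root of unity and using $\Phi_{p^{2}}(x)\nmid B(x)$ gives $\Phi_{p^{2}}(x)\mid D(x)$ and hence $p^{3}\mid n$, and so on, forcing $p^{k}\mid n$ for every $k$, which is absurd. The non-divisibility $\Phi_{p^{k}}(x)\nmid B(x)$ for $k\ge2$ is where an elementary but slightly delicate computation is needed: for $B=\{0,1,s\}$, a zero $B(\zeta')=0$ at a primitive $p^{k}$-th root $\zeta'$ would give $|1+\zeta'|=1$ and hence $\zeta'$ of order $3$, impossible; for $B=\{0,\pm1,\pm s\}$ one argues in the same spirit. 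Once $\Phi_{p}(x)\mid B(x)$ is established, substituting a primitive $p$-th root of unity into $B(x)$ reads off that $\{0,1,s\}$ is a complete residue system modulo $3$ in~(ii), i.e.\ $s\equiv2\pmod{3}$, and that $\{0,\pm1,\pm s\}$ is a complete residue system modulo $5$ in~(i), i.e.\ $s\equiv\pm2\pmod{5}$, finishing the proof.
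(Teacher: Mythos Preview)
The paper does not prove this proposition; it is quoted from Huang and Xu~\cite{BonEffDomVertexTrans} as background, with no argument given. So there is no ``paper's proof'' to compare against, and I will simply assess your proposal on its own merits.

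Your overall framework is correct: identifying dominating sets with covers $D+B=\ZZ_n$ and efficient dominating sets with tilings $D\oplus B=\ZZ_n$ immediately yields the lower bounds $\lceil n/|B|\rceil$ and the divisibility $|B|\mid n$. Your explicit dominating sets for the three upper bounds are correct, and so are the ``if'' directions of the efficient-domination clauses.

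For the ``only if'' direction in part~(ii) your mask-polynomial descent is sound. The key step, that $\Phi_{3^k}\nmid 1+x+x^s$ for $k\ge 2$, does follow from your $|1+\zeta'|=1$ observation, since the only points on the unit circle at distance $1$ from $-1$ are the primitive cube roots. You should, however, make explicit why the descent propagates: the cyclotomic factors $\Phi_{p},\Phi_{p^2},\ldots,\Phi_{p^k}$ are pairwise coprime with product $(x^{p^k}-1)/(x-1)$, so once they all divide $D(x)$ you get $p^k\mid D(1)=|D|=n/p$, hence $p^{k+1}\mid n$. You only stated this for the first step.

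Part~(i) is where you have an actual gap. The phrase ``one argues in the same spirit'' is misleading: with $B=\{0,\pm1,\pm s\}$ one has $B(\zeta')=1+2\cos\theta+2\cos(s\theta)$ for $\theta=2\pi/5^k$, and there is no analogue of the $|1+\zeta'|=1$ trick. The step can be rescued, but by a different (and in fact easier) observation: for $k\ge 2$ we have $0<\theta\le 2\pi/25<\pi/3$, hence $\cos\theta>1/2$, so
\[
B(\zeta')=1+2\cos\theta+2\cos(s\theta)>1+1-2=0,
\]
and thus $\Phi_{5^k}\nmid B$. Once $\Phi_5\mid B$ is forced, evaluating $1+\zeta+\zeta^{-1}+\zeta^{s}+\zeta^{-s}=0$ at a primitive fifth root shows $\{0,\pm1,\pm s\}$ hits all residues mod~$5$, i.e.\ $s\equiv\pm2\pmod 5$. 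You should write this out rather than allude to the $p=3$ argument.
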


\begin{proposition}[{Rad~\cite{DomCirc}}]
{\normalfont(i)} If $n\not\equiv 4\pmod 5$ then $\gamma(\ZZ_n,\{\pm1,\pm3\}) = \lceil n/5 \rceil$.

{\normalfont(ii)} If $n\equiv 4\pmod 5$ then $\gamma(\ZZ_n,\{\pm1,\pm3\})=\lceil n/5\rceil+1$.
\end{proposition}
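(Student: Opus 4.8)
The plan is to combine the trivial counting lower bound with explicit dominating-set constructions, and to improve the lower bound by one in the single residue class $n\equiv4\pmod5$. For all $n$, any dominator $u$ of $\Gamma(\ZZ_n,\{\pm1,\pm3\})$ dominates at most the $5$ vertices $u,u\pm1,u\pm3$, so $5|D|\ge n$ for every dominating set $D$, whence $\gamma(\ZZ_n,\{\pm1,\pm3\})\ge\lceil n/5\rceil$. When $5\mid n$ this is tight: $\{-3,-1,0,1,3\}$ is a complete residue system modulo $5$, so its translates by the subgroup $5\ZZ_n$ partition $\ZZ_n$, making $5\ZZ_n$ an efficient dominating set; hence $\gamma=n/5$. (This is also the case $s=3$ of Proposition~\ref{prop:eff}(i).)

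When $n=5q+r$ with $r\in\{1,2,3\}$, I would check directly that $D=\{0,5,10,\dots,5q\}$, of size $q+1=\lceil n/5\rceil$, is dominating: for $0\le j\le q-1$ the vertex $5j$ dominates $5j,5j+1,5j+3$ while $5(j+1)$ dominates $5j+2=5(j+1)-3$ and $5j+4=5(j+1)-1$, so $\{0,1,\dots,5q-1\}$ is dominated; $5q$ dominates $5q$ and $5q+1$; and the edge from $0$ to $-1\equiv 5q+2$ handles $5q+2$ in the case $r=3$. With the lower bound this gives $\gamma=\lceil n/5\rceil$. When $n=5q+4$ the upper bound $\gamma\le q+2=\lceil n/5\rceil+1$ follows from the augmented set $D'=\{0,5,\dots,5q\}\cup\{5q+2\}$: the multiples of $5$ dominate $\{0,\dots,5q-1\}$ as above, $5q$ dominates $5q$ and $5q+1$, the edge from $0$ dominates $5q+3\equiv-1$, and $5q+2$ dominates itself.

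The core of the argument is the matching lower bound $\gamma\ge q+2$ when $n=5q+4$; I will assume $n\ge9$ and dispose of $n=4$ separately (there $\{\pm1,\pm3\}$ reduces to $\{1,3\}$ and $\Gamma$ is the $4$-cycle $C_4$, with $\gamma=2=\lceil4/5\rceil+1$). Suppose $D$ dominates $\ZZ_n$ with $|D|=q+1$. Since for $n\ge7$ every dominator dominates exactly $5$ vertices, counting incidences yields $\sum_{v\in\ZZ_n}c(v)=5(q+1)=n+1$, where $c(v)\ge1$ counts the dominators of $v$; so exactly one vertex has $c(v)=2$ and all the rest have $c(v)=1$. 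Listing $D=\{d_0,\dots,d_q\}$ cyclically with consecutive gaps $g_0,\dots,g_q>0$, we have $\sum g_i=5q+4$, and I will derive a contradiction by showing $g_i\ge5$ for all $i$. If $g_i\le3$, the closed neighbourhoods of the consecutive dominators $d_i$ and $d_i+g_i$ overlap in at least two vertices (three if $g_i=2$), each of which then has $c(v)\ge2$, contradicting uniqueness; so $g_i\ge4$ for all $i$. If $g_i=4$, consider $v=d_i+2$: its only possible dominators are $v,d_i+1,d_i+3$ (all strictly between the consecutive dominators $d_i,d_{i+1}$), $d_i+5$ (strictly between $d_{i+1},d_{i+2}$, using $g_{i+1}\ge4$), and $d_i-1$ (strictly between $d_{i-1},d_i$, using $g_{i-1}\ge4$), none of which lies in $D$, so $v$ is undominated, a contradiction. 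Hence every $g_i\ge5$, so $n=\sum g_i\ge5(q+1)>5q+4$, the desired contradiction.

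The one genuinely delicate point is this gap analysis: the cases must be run in the order ``no gap equals $1,2,3$, then no gap equals $4$'', because the treatment of a gap of length $4$ already uses that the neighbouring gaps have length at least $4$; and the incidence count $\sum c(v)=n+1$ must be exact, which is exactly why $n=4$ (where the neighbourhood has only three distinct vertices) is excluded and checked by hand. Everything else reduces to straightforward verifications that the explicit sets above are dominating.
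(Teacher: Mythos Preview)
The paper does not give its own proof of this proposition; it is quoted from Rad~\cite{DomCirc} purely as background, so there is nothing to compare your argument against.

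Your proof is correct and self-contained. The upper-bound constructions are valid in every residue class (including the small cases), and the heart of the matter---the lower bound $\gamma\ge q+2$ when $n=5q+4\ge9$---is handled cleanly: the incidence count $\sum_v c(v)=5(q+1)=n+1$ does force exactly one doubly-dominated vertex, the overlap computations for gaps $g_i\in\{1,2,3\}$ each produce at least two doubly-dominated vertices (the relevant intersections $\{d_i,d_i+1\}$, $\{d_i-1,d_i+1,d_i+3\}$, $\{d_i,d_i+3\}$ are all of size $\ge2$ for $n\ge9$), and once all gaps are known to be $\ge4$ the vertex $d_i+2$ is genuinely undominated when $g_i=4$. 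Your remarks about the necessary ordering of the gap cases and the separate check at $n=4$ are exactly the points that need care, and you have them right.
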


Now we focus on domination in an \emph{integer distance (di)graph}, i.e., a Cayley graph $\Gamma(\ZZ,S)$ where $\ZZ$ is the infinite cyclic group of all integers under addition and $S\subseteq\ZZ$.
Our motivation is twofold.
On the one hand, integer distance graphs are natural generalizations of circulant graphs.
In fact, the chromatic number of an undirected integer distance graph $\Gamma(\ZZ,\pm S)$, where $\pm S:=\{\pm s:s\in S\}$, has been extensively studied before; see, e.g., Carraher, Galvin, Hartke, Radcliffe, and Stolee~\cite{IndepRatio}. 
On the other hand, an integer distance graph can be viewed as the limit of a sequence of circulant graphs and understanding domination in integer distance graphs may shed light on the asymptotic behavior of domination in large circulant graphs.

We assume $0\notin S$ throughout this paper, since removing an edge from a vertex $v$ to itself (i.e., a loop at $v$) has no effect on domination.
When $S$ is finite, the Cayley graph $\Gamma(\ZZ,S)$ is locally finite and a dominating set of $\Gamma(\ZZ,S)$ must be infinite, since every vertex dominates at most $|S|$ many other vertices.
To measure how large a possibly infinite subset $U$ of $\ZZ$ is, we define the \emph{(lower) density} of $U$ in $\ZZ$ as the following limit inferior 
\begin{equation}\label{eq:density}
\delta(U) := \liminf_{n\to\infty}  \frac{|U\cap [-n,n]|}{2n+1}. 
\end{equation}
For example, we have $\delta(U)=0$ and $\delta(\ZZ\setminus U)=1$ when $U$ is finite, and $\delta(U)=1/2$ when $U=2\ZZ$.
In general, one has $0\le \delta(U)\le 1$ for any $U\subseteq\ZZ$.
We define the \emph{domination ratio} $\dgamma(\ZZ,S)$ of the graph $\Gamma(\ZZ,S)$ to be the infimum of $\delta(D)$ over all dominating sets $D$ of $\Gamma(\ZZ,S)$.

Replacing limit inferior with limit superior in \eqref{eq:density} gives the \emph{upper density} of $U\subseteq\ZZ$.
Carraher, Galvin, Hartke, Radcliffe, and Stolee~\cite{IndepRatio} used upper density to study independent sets in an undirected integer distance graph $\Gamma(\ZZ,\pm S)$.
We provide some results on lower density in Section~\ref{sec:density}, with similar proofs to previous work~\cite{IndepRatio}.
For example, the following result proved in Section~\ref{sec:density} is a natural extension of an analogous result on independence ratio~\cite[Theorem~4]{IndepRatio}.

\begin{proposition}\label{prop:periodic}
Assume $S$ is a finite subset of $\ZZ\setminus\{0\}$.
Let $a$ and $b$ be the largest nonnegative integers in $S\cup\{0\}$ and $-S\cup\{0\}$, respectively.
Let $c:=a+b$.
Then the domination ratio of $\Gamma(\ZZ,S)$ is achieved by some periodic dominating set $D$ with period $p\le c2^c$.
\end{proposition}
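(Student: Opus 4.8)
The plan is to re-encode a dominating set by the bi-infinite binary sequence of its indicator function, observe that the domination condition is \emph{local} (it reads only a bounded window of that sequence), and thereby identify dominating sets with bi-infinite walks in a suitable finite digraph; the domination ratio then becomes a minimum cycle mean in that digraph, and a minimum-mean cycle visiting no vertex twice yields the desired periodic dominating set.

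In detail, write $c=a+b$ and note that every $s\in S$ lies in $[-b,a]$. For $D\subseteq\ZZ$ with indicator sequence $x=(x_i)_{i\in\ZZ}\in\{0,1\}^{\ZZ}$, the vertex $v$ is dominated by $D$ exactly when $x_{v-s}=1$ for some $s\in S\cup\{0\}$, and all the relevant indices $v-s$ lie in the window $[v-a,v+b]$ of $c+1$ consecutive integers. Hence $D$ is a dominating set of $\Gamma(\ZZ,S)$ if and only if every length-$(c+1)$ window of $x$ carries a $1$ in at least one of a fixed set $P=\{\,a-s+1:s\in S\cup\{0\}\,\}\subseteq\{1,\dots,c+1\}$ of positions. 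I would then build the digraph $H$ whose vertices are the $2^{c}$ binary words of length $c$, with an edge $w=(w_1,\dots,w_c)\to w'=(w_2,\dots,w_{c+1})$ whenever $w_1\cdots w_{c+1}$ has a $1$ in some position of $P$, weighted by the newly committed bit $w_{c+1}$. Reading off the committed bits gives a correspondence under which bi-infinite walks in $H$ are precisely the dominating sets of $\Gamma(\ZZ,S)$, a closed walk of length $\ell$ produces a periodic dominating set for which $\ell$ is a period, and (using that over any symmetric window the committed-bit count equals $|D\cap[-n,n]|$ up to an error of $O(c)$) the density $\delta(D)$ equals the limit inferior of the running average edge-weight along the corresponding walk, equal to $(\text{total weight})/\ell$ when $D$ is periodic coming from a closed walk of length $\ell$.

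Let $\mu$ be the minimum mean weight over cycles of $H$. Since splitting a closed walk at a repeated vertex writes its mean as a weighted average of the means of two shorter closed walks, $\mu$ is attained by a \emph{simple} cycle $C$, which uses at most $2^{c}$ vertices and hence at most $2^{c}$ edges; running around $C$ gives a periodic dominating set of period at most $2^{c}\le c2^{c}$ and density $\mu$, so $\dgamma(\ZZ,S)\le\mu$. For the opposite inequality, take any dominating set $D$ and a sequence $n_k\to\infty$ with $|D\cap[-n_k,n_k]|/(2n_k+1)\to\delta(D)$; on the walk segment of length $L_k\sim 2n_k$ carrying these positions, repeatedly delete sub-cycles until fewer than $2^{c}$ edges remain. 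Each deleted cycle has weight at least $\mu$ times its length and the leftover path has nonnegative weight, so the segment has weight at least $\mu(L_k-2^{c})\ge\mu L_k-2^{c}$, giving $\delta(D)\ge\mu$. Therefore $\dgamma(\ZZ,S)=\mu$, attained by the periodic dominating set built from $C$, whose period is at most $2^{c}$ and in particular at most $c2^{c}$. The degenerate cases are immediate: if $S$ is entirely positive or entirely negative then $b=0$ or $a=0$ and the argument runs verbatim, and if $S=\emptyset$ the only dominating set is $\ZZ$, which is periodic of period $1$ (so one may as well assume $S\ne\emptyset$, whence $c\ge1$).

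I expect the one genuinely delicate point to be the lower bound $\dgamma(\ZZ,S)\ge\mu$, precisely because $\delta$ is a \emph{lower} density: one cannot assume the running average of edge-weights converges along the whole walk, only along a subsequence of symmetric windows, so the bound must instead be extracted from arbitrarily long \emph{finite} walk segments — which is exactly what the cycle-deletion estimate delivers. Everything else (the locality reformulation, the walk/dominating-set dictionary, the $O(c)$ boundary bookkeeping, and the reduction from closed walks to simple cycles) is routine, along the lines of~\cite[Theorem~4]{IndepRatio}, and the period bound falls out of the size $2^{c}$ of the vertex set of $H$.
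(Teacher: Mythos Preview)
Your argument is correct and follows the same overall strategy as the paper's proof (which appears there as Proposition~\ref{prop:period}): encode dominating sets as bi-infinite walks in a finite state digraph, identify the domination ratio with the minimum cycle mean, and read off a periodic minimizer from a simple cycle. The one substantive difference is in the choice of state graph. The paper takes as a \emph{state} the intersection $D\cap[ic+1,(i+1)c]$ with a full block of length $c$, so consecutive states are disjoint windows and a simple cycle of length at most $2^{c}$ in the state graph produces a periodic set of period at most $c\cdot 2^{c}$; the paper then cites \cite[Lemma~3]{IndepRatio} for the fact that the infimum over bi-infinite walks is achieved on a simple cycle. You instead use the de~Bruijn--style encoding with one-bit shifts, so that each edge commits a single new bit and a simple cycle of length $\ell\le 2^{c}$ already gives a periodic dominating set of period $\ell\le 2^{c}$. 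Thus your construction actually yields the sharper bound $p\le 2^{c}$ (which of course implies the stated $p\le c2^{c}$), and your self-contained cycle-deletion estimate replaces the citation to \cite{IndepRatio} for the lower bound $\dgamma(\ZZ,S)\ge\mu$. Both routes are standard; yours is slightly more economical.
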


Here a set $U\subseteq \ZZ$ is \emph{periodic} if there exists a positive integer $d$ such that 
\[ U\cap[id+1,id+d] = \{id+j: j\in U\cap[1,d]\}, \quad \forall i\in\ZZ.\]
The smallest such integer $d$ is called the \emph{period} of $U$.

The above result not only shows that the domination ratio of $\Gamma(\ZZ,S)$ is achieved by some periodic dominating set $D$, but also implies that the circulant digraph $\Gamma(\ZZ_p,S_p)$ has a minimum dominating set $D\cap[1,p]$ and its domination number is given by $\gamma(\ZZ_p,S_p) = |D\cap[1,p]| = \dgamma(\ZZ,S) p$, where $\ZZ_p:=\{1,2,\ldots,p\}$ is the cyclic group of order $p$ under addition modulo $p$ and $S_p$ is the subset of $\ZZ_p$ consisting of all the least positive residues of elements in $S$ modulo $p$. 
See Proposition~\ref{prop:finite}.

Next, we establish the following basic results on the domination ratio of $\Gamma(\ZZ,S)$ in Section~\ref{sec:basic}.

\begin{proposition}\label{prop:basic}
{\normalfont(i)} If $S\subseteq S'\subseteq \ZZ\setminus\{0\}$ then $\dgamma(\ZZ,S') \le \dgamma(\ZZ,S) = \dgamma(\ZZ,-S)$.

{\normalfont(ii)} If $|S|\le1$ then $\dgamma(\ZZ,S)=1/(1+|S|)$. If $2\le |S|<\infty$ then $1/(|S|+1)\le \dgamma(\ZZ,S)\le 1/2$.

{\normalfont(iii)} If $S$ is finite and there exists an efficient dominating set of $\Gamma(\ZZ,S)$, then $\dgamma(\ZZ,S)=1/(|S|+1)$.

{\normalfont(iv)} If $S = \{ i_1(s+1)+1,i_2(s+1)+2,\ldots,i_s(s+1)+s\}$ with $i_1,\ldots,i_s\in\ZZ$, then $\dgamma(\ZZ,S)=1/(s+1)$.

{\normalfont(v)} If $d$ divides all elements of $S$, then $\dgamma(\ZZ,S/d) = \dgamma(\ZZ,S)$, where $S/d:=\{s/d:d\in S\}$.
\end{proposition}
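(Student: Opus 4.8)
The plan is to prove the five parts in order, since (iii) relies on (ii) and (iv) relies on (iii). For (i), enlarging $S$ only adds edges, so a dominating set of $\Gamma(\ZZ,S)$ is still a dominating set of $\Gamma(\ZZ,S')$ whenever $S\subseteq S'$; taking infima of densities gives $\dgamma(\ZZ,S')\le\dgamma(\ZZ,S)$. The negation map $v\mapsto -v$ is a digraph isomorphism of $\Gamma(\ZZ,S)$ onto $\Gamma(\ZZ,-S)$ (because $v-u\in S\cup\{0\}$ exactly when $(-v)-(-u)\in -S\cup\{0\}$) and it fixes each window $[-n,n]$ setwise, hence preserves $\delta$; so $\dgamma(\ZZ,S)=\dgamma(\ZZ,-S)$. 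For (ii), the case $|S|=0$ is immediate since $D=\ZZ$ is then the only dominating set. For $|S|=1$, by (i) I may take $S=\{s\}$ with $s\ge1$; then $\Gamma(\ZZ,\{s\})$ is the disjoint union of the $s$ bi-infinite directed paths given by the residue classes modulo $s$, and on such a path a set dominates exactly when its complement has no two consecutive vertices, forcing density $\ge1/2$ on each path and hence on $\ZZ$, while $D=\bigcup_{k\in\ZZ}[2ks,(2k+1)s-1]$ dominates with $\delta(D)=1/2$. For $2\le|S|<\infty$, put $M=\max_{s\in S}|s|$; every vertex of $[-n,n]$ is dominated by an element of $D\cap[-n-M,n+M]$, each dominating at most $|S|+1$ vertices, so $2n+1\le(|S|+1)\,|D\cap[-n-M,n+M]|$, and dividing by $2(n+M)+1$ and letting $n\to\infty$ gives $\dgamma(\ZZ,S)\ge 1/(|S|+1)$, while $\dgamma(\ZZ,S)\le\dgamma(\ZZ,\{s\})=1/2$ for any $s\in S$ by (i) and the previous case.

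For (iii), an efficient dominating set $D$ makes $(u,r)\mapsto u+r$ a bijection $D\times(\{0\}\cup S)\to\ZZ$ (surjective because $D$ dominates, injective because $D$ is efficient, with $\{0\}\cup S$ of size $|S|+1$ since $0\notin S$). Restricting it to $(D\cap[-n,n])\times(\{0\}\cup S)$, whose image lies in $[-n-M,n+M]$, gives $(|S|+1)\,|D\cap[-n,n]|\le 2(n+M)+1$, so $\delta(D)\le 1/(|S|+1)$; with the lower bound from (ii) this yields $\dgamma(\ZZ,S)=1/(|S|+1)$. For (iv), the listed elements of $S$ are pairwise incongruent modulo $s+1$, so $|S|=s$ and $0\notin S$, and I would verify directly that $(s+1)\ZZ$ is an efficient dominating set of $\Gamma(\ZZ,S)$: the integer $(s+1)m$ dominates itself, and for $1\le j\le s$ the integer $(s+1)m+j$ is dominated by $(s+1)(m-i_j)$ through the generator $i_j(s+1)+j$, and by no other element of $(s+1)\ZZ$. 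Then (iii) gives $\dgamma(\ZZ,S)=1/(s+1)$.

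For (v), the structural fact is that when $d$ divides every element of $S$, the digraph $\Gamma(\ZZ,S)$ is the disjoint union of the $d$ subdigraphs induced on the residue classes $d\ZZ+r$, $0\le r<d$, each isomorphic to $\Gamma(\ZZ,S/d)$ under $v\mapsto(v-r)/d$. Given a dominating set $D'$ of $\Gamma(\ZZ,S/d)$, the set $D=\{dx+r:x\in D',\,0\le r<d\}$ dominates $\Gamma(\ZZ,S)$, and comparing $|D\cap[-dn,dn]|$ with $|D'\cap[-n,n]|$ shows $\delta(D)\le\delta(D')$, giving $\dgamma(\ZZ,S)\le\dgamma(\ZZ,S/d)$. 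Conversely, given a dominating set $D$ of $\Gamma(\ZZ,S)$, each slice $D^{(r)}=\{x:dx+r\in D\}$ dominates $\Gamma(\ZZ,S/d)$, and splitting the window $[-N,N]$ by residue class shows $\delta(D)\ge\frac1d\sum_{r=0}^{d-1}\delta(D^{(r)})\ge\dgamma(\ZZ,S/d)$, so $\dgamma(\ZZ,S)=\dgamma(\ZZ,S/d)$.

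The step I expect to require the most care is the density bookkeeping in (v). Because $\delta$ is a limit inferior and not a limit, I would avoid claiming exact additivity and instead use only the one-sided facts that the limit inferior of a sum is at least the sum of the limit inferiors, and that restricting to a subsequence of windows (such as $N\in d\ZZ$) cannot lower the limit inferior; together with the observation that the rescaled windows $[a_r(N),b_r(N)]$ for the slices differ from symmetric intervals $[-\lfloor N/d\rfloor,\lfloor N/d\rfloor]$ by only a bounded number of terms, this suffices for both inequalities. A cleaner alternative, available because $S$ is finite, is to invoke Proposition~\ref{prop:periodic} and realize the domination ratio on each side by a periodic dominating set: for a periodic set $\delta$ equals the exact density over one period, the two sides share a common period after replacing a period by a multiple, and the two inequalities collapse to a finite count. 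The same handling of the $O(M)$ boundary terms underlies the lower bounds in (ii) and (iii), where it is entirely routine.
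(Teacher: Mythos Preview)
Your proof is correct and follows essentially the same approach as the paper, which proves the five parts as separate results (Lemma~3.1, Propositions~3.2--3.5, Corollary~3.6) via the same subgraph/isomorphism arguments for (i), the same counting with bounded boundary terms for the $1/(|S|+1)$ bounds in (ii)--(iii), the same verification that $(s+1)\ZZ$ is efficient for (iv), and the same residue-class decomposition for (v). The only minor differences are organizational: the paper derives the $|S|=1$ case of (ii) from (iv) and (v) rather than arguing directly on paths, and it waves its hands at the density bookkeeping in (v) (``The result then follows''), whereas you spell out the liminf care more explicitly.
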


To further study the domination ratio $\dgamma(\ZZ,S)$, we observe that a dominating set of $\Gamma(\ZZ,S)$ can be written as $D=\{x_i:i\in\ZZ\}$, where $x_i<x_{i+1}$ for all $i\in \ZZ$, and it decomposes $\ZZ$ into a disjoint union of \emph{blocks} $B_i=\{x_i,x_i+1,\ldots,x_{i+1}-1\}$ for all $i\in\ZZ$. 
We develop some lemmas about blocks in Section~\ref{sec:block} and use them to prove the following result in Section~\ref{sec:cluster}.

\begin{theorem}\label{thm:1s}
{\normalfont(i)} For any integer $k$ we have $\dgamma(\ZZ,\{1,3k+2\}) = 1/3$.

{\normalfont(ii)} For any positive integer $k$ we have $\dgamma(\ZZ,\{1,3k+1\}) = \dgamma(\ZZ,\{1,-3k\}) = (k+1)/(3k+2)$.

{\normalfont(iii)} For any positive integer $k$ we have $\dgamma(\ZZ,\{1,3k\}) = \dgamma(\ZZ,\{1,-3k+1\}) = 2k/(6k-1)$.
\end{theorem}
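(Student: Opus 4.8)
The plan is to reduce all three parts to periodic dominating sets and then to a single inequality about the multiset of block lengths, after first disposing of the claimed equalities and the upper bounds.

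\emph{Reductions.} For $E\subseteq\ZZ$, $E$ is a dominating set of $\Gamma(\ZZ,\{s_1,s_2\})$ iff $E\cup(E+s_1)\cup(E+s_2)=\ZZ$; translating by $-s_1$ and regrouping, this holds iff $E-s_1$ is a dominating set of $\Gamma(\ZZ,\{-s_1,\,s_2-s_1\})$, and translation preserves density. Taking $s_1=1$ turns $\{1,3k+1\}$ into $\{-1,3k\}$ and $\{1,3k\}$ into $\{-1,3k-1\}$, so with $\dgamma(\ZZ,S)=\dgamma(\ZZ,-S)$ from Proposition~\ref{prop:basic}(i) we get $\dgamma(\ZZ,\{1,3k+1\})=\dgamma(\ZZ,\{1,-3k\})$ and $\dgamma(\ZZ,\{1,3k\})=\dgamma(\ZZ,\{1,-3k+1\})$. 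By Proposition~\ref{prop:periodic} it now suffices to work with a periodic dominating set $D=\{x_i:i\in\ZZ\}$, $x_i<x_{i+1}$, of period $p$ with $r$ blocks $B_i=[x_i,x_{i+1})$ of lengths $\ell_i=x_{i+1}-x_i$ per period, for which $\sum_{\text{period}}\ell_i=p$ and $\delta(D)=r/p$. Part~(i) is immediate: $\{1,3k+2\}=\{0\cdot 3+1,\ k\cdot 3+2\}$ has the form required in Proposition~\ref{prop:basic}(iv) with $s=2$, so $\dgamma(\ZZ,\{1,3k+2\})=1/3$ (equivalently $3\ZZ$ is an efficient dominating set).

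\emph{Upper bounds.} For $\Gamma(\ZZ,\{1,3k+1\})$ I would take the periodic set with $D\bmod(3k+2)=\{0,2,5,8,\dots,3k-1\}$ — one block of length $2$ followed by $k$ blocks of length $3$ — which has density $(k+1)/(3k+2)$; for $\Gamma(\ZZ,\{1,3k\})$ the periodic set of period $6k-1$ with one block of length $1$, one block of length $4$, and $2k-2$ blocks of length $3$, arranged so the length-$4$ block's forced requirement is met by the length-$1$ block (for $k=1$ this is $D\bmod 5=\{0,1\}$), which has density $2k/(6k-1)$. In each case only the domination requirement isolated below has to be checked.

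\emph{Demand lemma.} Within a block $B_i$ the vertex $x_i$ dominates itself, $x_i+1$ is dominated from $x_i$ via the step $1$, and each of $x_i+2,\dots,x_i+\ell_i-1$ must be dominated via the step $t$; hence if $\ell_i\ge 3$ then the $\ell_i-2$ consecutive integers $I_i:=[\,x_i+2-t,\ x_i+\ell_i-1-t\,]$ all lie in $D$. Since $I_i\subseteq B_i-t$ and the translates $\{B_i-t\}$ partition $\ZZ$, the $I_i$ (over blocks of length $\ge 3$) are pairwise disjoint. Two consequences: (a) every block of length $\ell\ge 4$ forces $\ell-3$ distinct length-$1$ blocks, namely those starting at all but the last element of $I_i$, so the number $a$ of length-$1$ blocks per period satisfies $a\ge L:=\sum_{\ell_i\ge 4}(\ell_i-3)$; and (b) the case in which every block has length $3$ is impossible, for then $D$ is a coset of $3\ZZ$ and the points of $I_i$ lie outside $D$.

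\emph{The key inequality, and the main obstacle.} It remains to show $(k+1)p\le(3k+2)r$ when $t=3k+1$ and $2k\,p\le(6k-1)r$ when $t=3k$. Attach to each block the weight $\psi_i:=(3k+2)-(k+1)\ell_i$ (resp.\ $\psi_i:=(6k-1)-2k\ell_i$), which is positive for $\ell_i\le 2$ and negative for $\ell_i\ge 3$, so the target is exactly $\sum_{\text{period}}\psi_i\ge 0$. Using (a) to charge the large negative weight of each block of length $\ge 4$ against the positive weights of the length-$1$ blocks it forces — and then $a\ge L$ — the inequality reduces to the purely combinatorial claim, with $c'$ the number of blocks of length $\ge 3$: $c'\le k(a+b)$ when $t=3k+1$, and $c'\le(2k-1)(a+b)$ when $t=3k$, where $a,b$ count blocks of length $1,2$ per period (and by (b) together with $a\ge L$ the degenerate case $a+b=0$ cannot occur). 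Equivalently: cyclically, between two consecutive blocks of length $\le 2$ there are at most $k$ (resp.\ $2k-1$) blocks of length $\ge 3$. This is where the block lemmas of Section~\ref{sec:block} enter, and it is the hardest step. When every block has length $2$ or $3$ it is clean: if $B_i,\dots,B_{i+k}$ were all of length $3$ then $x_{i+k}=x_i+3k$, so the demand of $B_{i+k}$ would force $x_i+1\in D$, contradicting $\ell_i\ge 3$ (similarly with $k$ replaced by $2k$ when $t=3k$). In general, however, a maximal run of $\ge k+1$ (resp.\ $\ge 2k$) consecutive blocks of length $\ge 3$ need not be excluded by such a one-step argument once blocks of length $\ge 4$ are allowed — a run like $3,\,3k-1,\,3$ can genuinely occur. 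The argument I expect to need instead shows that the last block of such a run must have length exactly $3$, that its step-$t$ demand point is a block start only a bounded number of blocks earlier inside the run, and then follows this demand chain, combining it with the disjointness in the demand lemma to either exhibit the required supply of forced length-$1$ blocks or contradict maximality of the run. Carrying out this demand-chain bookkeeping carefully is the delicate heart of the proof.
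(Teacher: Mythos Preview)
Your translation reduction is a genuine contribution: the observation that $E$ dominates $\Gamma(\ZZ,\{s_1,s_2\})$ iff $E+s_1$ dominates $\Gamma(\ZZ,\{-s_1,s_2-s_1\})$ (you wrote $E-s_1$, a harmless sign slip) immediately yields $\dgamma(\ZZ,\{1,3k+1\})=\dgamma(\ZZ,\{1,-3k\})$ and $\dgamma(\ZZ,\{1,3k\})=\dgamma(\ZZ,\{1,-3k+1\})$, which actually answers the open question the paper poses in its concluding section. The paper does not see this and instead runs both cases of (ii) and of (iii) in parallel throughout the lower-bound argument. Part~(i) and the upper-bound constructions match the paper.

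The lower-bound argument, however, has a real gap. Your ``equivalently'' is not an equivalence: $c'\le k(a+b)$ is a global averaging statement, while the run-length bound (at most $k$ long blocks between consecutive short blocks) is strictly stronger --- and it is \emph{false}. Your own example $3,\ 3k-1,\ 3$ extends, for $k\ge 2$, to the periodic dominating set $(1\ 1\ 3\ (3k{-}1)\ 3)^\infty$ of $\Gamma(\ZZ,\{1,3k+1\})$, which has a run of $k+1$ consecutive blocks of length $\ge 3$. The demand-chain sketch you describe is aimed at this false run-length statement, so it cannot succeed as written; what you actually need is the global inequality, and no argument for it is given. The paper's route is quite different and does not pass through periodicity at all: it assigns weights $f(x_i)$ to block-starts (roughly $(b_i-1)/2$ for long blocks, with their forced $1$-blocks reweighted to $1/2$, and analogous values in case (iii)), uses Lemma~\ref{lem:toggle} to get $\delta(D)=\delta(f)$, and then covers $\ZZ$ by bounded-length \emph{clusters}, each consisting of a block $B_r$ together with the interval of length $|s|-1$ spanned by its demand; a direct count of block sizes inside each cluster shows $\|f(C)\|/|C|$ is at least the target ratio. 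If you want to salvage your $\psi$-weight formulation, the missing idea is precisely such a decomposition into bounded windows on which $\sum\psi_i\ge 0$ holds locally, rather than any bound on run lengths.
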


Our proof of the above theorem uses certain partitions of the collection of all blocks obtained from a given dominating set.
This method is different from the one used in earlier work~\cite{IndepRatio} on the independence ratio of integer distance graphs.

Combining Proposition~\ref{prop:basic} (v) and Theorem~\ref{thm:1s} one can determine the domination ratio of $\Gamma(\ZZ,S)$ whenever $S$ consists of two distinct nonzero integers $s$ and $t$ with $s\mid t$.
If $t/s\equiv 2\pmod 3$ then $\dgamma(\ZZ,S)=1/3$; in this case $\Gamma(\ZZ,S)$ has an efficient dominating set by the proof of Theorem~\ref{thm:1s} (i).
If $t/s\not\equiv2\pmod 3$ then $\dgamma(\ZZ,S)$ is given by Theorem~\ref{thm:1s} (ii) and (iii), and since it is strictly larger than $1/3$, there exists no efficient dominating set for $\Gamma(\ZZ,S)$ in this case.

Combining Theorem~\ref{thm:1s} with Proposition~\ref{prop:finite} gives the domination number of certain circulant digraphs.
We have $\gamma(\ZZ_{3k+2},\{\pm1\}) = \gamma(\ZZ_{3k+2},\{1,2\})=k+1$, which agrees with Proposition~\ref{prop:eff} (ii), and $\gamma(\ZZ_{6k-1},\{1,3k\}) =2k$, for any positive integer $k$.
See Corollary~\ref{cor:finite}.

The existence of an efficient dominating set of $\Gamma(\ZZ,S)$ is equivalent to the ability to tile the integers with translates of $S\cup\{0\}$ (overlaps not allowed).
Researchers have extensively studied when a given set $X\subseteq\ZZ$ can tile the integers.
For example, Newman~\cite{Newman} solved this problem when the cardinality of $X$ is a power of a prime, and Coven and Meyerowitz~\cite{TilingIntegers} extended this to the case of at most two prime factors in the cardinality of $X$.
The result of Newman~\cite[Theorem~1, 2]{Newman} implies our characterization of the existence of an efficient dominating set of $\Gamma(\ZZ,\{s,t\})$ with $s\mid t$ as well as Proposition~\ref{prop:basic} (iv).
A lemma used by Coven and Meyerowitz~\cite[Lemma~1.2]{TilingIntegers}, first due to Haj\'{o}s~\cite{Hajos} and de Bruijn~\cite{deBruijn}, implies the special case of  Proposition~\ref{prop:periodic} when an efficient dominating set exists.
When there is no efficient dominating set, the investigation of the domination ratio of $\Gamma(\ZZ,S)$ would be natural and meaningful, as it tells us the most efficient ways to cover the integers with translates of $S\cup\{0\}$ (overlaps allowed).

\section{Density of a set of integers}\label{sec:density}

First recall that the \emph{limit inferior} of a sequence $(x_n)$ of real numbers is defined as
\[ \liminf_{n\to\infty} x_n := \lim_{n\to\infty} \Big(\inf_{m\ge n} x_m \Big) = \sup_{n\ge0} \Big(\inf_{m\ge n} x_m \Big). \]
This is either a real number or $\pm\infty$.
Moreover, it equals the ordinary limit of $(x_n)$ whenever the latter exists.
If $x_n\ge y_n$ for all sufficiently large $n$, then 
\[ \liminf_{n\to\infty} x_n \ge \liminf_{n\to\infty} y_n.\]

Next we generalize the density $\delta(U)$ of a subset $U\subseteq \ZZ$ to a weighted version.
Let $f:\ZZ\to\RR$ be a function.
For each nonempty finite set $A\subseteq\ZZ$ we write 
\[ \|f(A)\| := \sum_{a\in A} f(a).\] 
Define the \emph{density} of $f$ to be
\[ \delta(f):=\liminf_{n\to\infty} \frac{ \|f(\ZZ\cap[-n,n])\| }{2n+1}.\]
In particular, for any $U\subseteq \ZZ$, let $f=\chi_{\ds U}:\ZZ\to\RR$ be defined by 
\[ \chi_{\ds U}(i):=\begin{cases}
1, & i\in U,\\
0, & i\in\ZZ\setminus U.
\end{cases}\]
We define the \emph{density of $U$ in $\ZZ$} to be
\[ \delta(U) := \delta(\chi_{\ds U}) = \liminf_{n\to\infty}  \frac{|U\cap [-n,n]|}{2n+1}. \]
This agrees with the earlier definition~\eqref{eq:density} for $\delta(U)$.

The next two lemmas are extensions of some results in previous work on independence ratio~\cite[Lemma 17, 18]{IndepRatio}.
Lemma~\ref{lem:density} shows that the density can be calculated not only by looking over the interval $[-n,n]$, but also by looking at multiples of the interval and by making small bounded changes at both ends of the interval.

\begin{lemma}\label{lem:density}
Fix two positive integers $d$ and $N$.
Suppose that $f:\ZZ\to\RR$ satisfies $f(i)\ge0$ for all $i\in \ZZ$. 
Let $(\ell_m)$ and $(r_m)$ be two sequences of integers with $-N\le \ell_m, r_m \le N$ for all $m$.
Then
\[ \delta(f) = \liminf_{m\to\infty} \frac{\| f(\ZZ\cap[-md-\ell_m,md+r_m]) \|}{2md+\ell_m+r_m+1}. \]
\end{lemma}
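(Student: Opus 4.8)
The plan is to sandwich the weighted average over the skewed interval $[-md - \ell_m, md + r_m]$ between two averages over honest centered intervals $[-n,n]$, and then take $\liminf$ using the elementary monotonicity property of $\liminf$ recalled at the start of the section. The key point is that shifting each endpoint by a bounded amount $N$ changes both numerator and denominator only by a bounded amount, which is swamped asymptotically by the linear growth of the interval.

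\smallskip

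\noindent\textbf{Step 1: Set up the two-sided comparison.} Fix $m$ and write $I_m := \ZZ\cap[-md-\ell_m, md+r_m]$. Since $-N\le \ell_m, r_m\le N$, we have the inclusions
\[ \ZZ\cap[-(md-N), md-N] \ \subseteq\ I_m \ \subseteq\ \ZZ\cap[-(md+N), md+N]. \]
Because $f\ge 0$, summing $f$ is monotone under inclusion of finite sets, so
\[ \| f(\ZZ\cap[-(md-N), md-N]) \| \ \le\ \| f(I_m) \| \ \le\ \| f(\ZZ\cap[-(md+N), md+N]) \|. \]
For the denominators, note $2md+\ell_m+r_m+1$ lies between $2(md-N)+1$ and $2(md+N)+1$. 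Dividing the upper bound on $\|f(I_m)\|$ by the smaller denominator and the lower bound by the larger denominator yields
\[ \frac{\| f(\ZZ\cap[-(md-N), md-N]) \|}{2(md+N)+1} \ \le\ \frac{\| f(I_m) \|}{2md+\ell_m+r_m+1} \ \le\ \frac{\| f(\ZZ\cap[-(md+N), md+N]) \|}{2(md-N)+1}. \]

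\smallskip

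\noindent\textbf{Step 2: Identify the outer liminf's with $\delta(f)$.} It suffices to show that both outer expressions, as functions of $m$, have $\liminf$ (indeed limit behavior along the relevant subsequences) equal to $\delta(f)$. Consider the right-hand side: write $n = md+N$, so the numerator is $\|f(\ZZ\cap[-n,n])\|$ and the denominator is $2n+1 - 4N$. Then
\[ \frac{\|f(\ZZ\cap[-n,n])\|}{2n+1-4N} \ =\ \frac{\|f(\ZZ\cap[-n,n])\|}{2n+1}\cdot\frac{2n+1}{2n+1-4N}, \]
and the second factor tends to $1$ as $m\to\infty$. Hence
\[ \liminf_{m\to\infty} \frac{\|f(\ZZ\cap[-(md+N),md+N])\|}{2(md-N)+1} \ =\ \liminf_{m\to\infty} \frac{\|f(\ZZ\cap[-(md+N),md+N])\|}{2(md+N)+1}. \]
Now this is a $\liminf$ of a subsequence of $\bigl(\|f(\ZZ\cap[-n,n])\|/(2n+1)\bigr)_{n}$, so it is $\ge \delta(f)$. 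The same argument applied to the left-hand side gives a $\liminf$ that is $\le \delta(f)$ (again the ratio of denominators tends to $1$, and a subsequential $\liminf$ of a bounded-below sequence can only be $\ge$ the full $\liminf$ — so to get $\le$ we instead observe that the left side is dominated termwise by the target and conclude the squeeze differently). Combining with Step 1, every subsequential limit of $\|f(I_m)\|/(2md+\ell_m+r_m+1)$ lies in $[\delta(f),\delta(f)]$... except that the one-sided bounds only pin it from one side each.

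\smallskip

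\noindent\textbf{Step 3: Close the argument via the full sequence identity.} The clean way, which I would actually write, is to prove the \emph{special case} $d=1$, $\ell_m=r_m=0$ trivially (it is the definition), then handle general $d$ with $\ell_m=r_m=0$ by noting $\delta(f) = \liminf_n a_n$ where $a_n = \|f(\ZZ\cap[-n,n])\|/(2n+1)$, and that along $n=md$ the $\liminf$ could a priori be larger; the reverse inequality is exactly the content needing the bounded-perturbation trick of Steps 1--2 applied with the sequences chosen to realize the full $\liminf$ of $(a_n)$. Concretely, pick $n_j\to\infty$ with $a_{n_j}\to\delta(f)$; write $n_j = m_j d - \ell_{m_j}$ with $m_j := \lceil n_j/d\rceil$ and $\ell_{m_j} := m_j d - n_j \in[0,d-1]\subseteq[-N,N]$ (taking $N\ge d$), and $r_{m_j}:=0$ padded by at most $d-1$ further terms on the right; then the skewed average along $m_j$ differs from $a_{n_j}$ by $O(1/n_j)\to 0$, forcing the $\liminf$ of the skewed averages to be $\le \delta(f)$. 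Together with the $\ge \delta(f)$ direction from Steps 1--2 (every skewed average is, after the denominator-ratio correction, a genuine $a_n$ up to $o(1)$, hence $\ge \delta(f) - o(1)$), we get equality.

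\smallskip

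\noindent\textbf{Main obstacle.} The only real subtlety is bookkeeping: a subsequential $\liminf$ is automatically $\ge$ the full $\liminf$, so the ``$\ge\delta(f)$'' half is painless, but the ``$\le\delta(f)$'' half genuinely requires engineering the sequences $(\ell_m),(r_m)$ (or rather verifying that the asserted \emph{arbitrary} bounded sequences cannot make the value jump up) — one must check that an arbitrary bounded perturbation of the endpoints cannot \emph{increase} the $\liminf$, which follows because each skewed average is $\le a_{n}\cdot(1+o(1))$ for the nearby honest index $n=md-N$ (smaller interval, larger-corrected denominator), so its $\liminf$ is $\le \liminf_n a_n = \delta(f)$. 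Getting the directions of all these inequalities consistent with $f\ge 0$ and with which denominator is larger is where care is needed; there is no conceptual difficulty beyond that.
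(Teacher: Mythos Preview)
Your approach has a genuine gap in establishing the inequality $\liminf_m \|f(I_m)\|/(2md+\ell_m+r_m+1) \le \delta(f)$. Writing $a_n := \|f(\ZZ\cap[-n,n])\|/(2n+1)$, your Step~1 sandwich (after the denominator correction) gives only that the skewed average at $m$ is $\le a_{md+N}\cdot(1+o(1))$. Taking $\liminf$ over $m$ yields merely $\liminf_m(\text{skewed})_m \le \liminf_m a_{md+N}$, and the right side is a $\liminf$ along the arithmetic progression $\{md+N:m\ge1\}$, which can strictly exceed $\delta(f)$. Your attempted fix in the ``Main obstacle'' paragraph --- bounding the skewed average above by $a_{md-N}\cdot(1+o(1))$ --- is false: with $n=md-N$ one has $[-n,n]\subseteq I_m$, so both the numerator and the denominator of the skewed average are \emph{larger} than those of $a_n$, and no comparison follows. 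The Step~3 idea of choosing $n_j$ realizing $\delta(f)$ and writing $n_j=m_jd-\ell_{m_j}$ requires you to \emph{choose} the sequence $(\ell_m)$, contradicting the hypothesis that $(\ell_m),(r_m)$ are arbitrary fixed sequences.

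The paper avoids this by reversing the direction of the sandwich: for each $n$ it sets $m:=\lceil(n+N)/d\rceil$ and $m':=\lfloor(n-N)/d\rfloor$, so that the skewed interval at $m'$ is contained in $[-n,n]$, which is contained in the skewed interval at $m$; thus $a_n$ lies between the skewed averages at $m'$ and at $m$ (times factors tending to $1$). As $n$ runs over \emph{all} large integers, $m(n)$ and $m'(n)$ each hit every large integer, so the $\liminf$ over $n$ of the skewed average at $m(n)$ genuinely equals the $\liminf$ over all $m$; taking $\liminf_n$ of the sandwich then pins $\delta(f)$ from both sides. The asymmetry you ran into --- that a subsequential $\liminf$ is automatically $\ge$ the full one but not $\le$ --- is exactly why one must let $n$ index the centered intervals and derive $m,m'$ from $n$, rather than the other way around.
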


\begin{proof}
For each sufficiently large integer $n$, let $m := \lceil (n+N)/d\rceil$ and $m' := \lfloor (n-N)/d \rfloor$.
Then
\[ \max\{m'd+\ell_m,m'd+r_m\} \le m'd+N \le n \le md-N\le \min\{ md+\ell_m, md+r_m\}.\]
This implies
\[ [-m'd-\ell_m,m'd+r_m] \subseteq [-n,n] \subseteq [-md-\ell_m,md+r_m].\]
Since $f(i)\ge0$ for all $i\in \ZZ$, we have
\begin{multline*}
\frac{2m'd+\ell_m+r_m+1}{2n+1} \cdot \frac{ \| f([-m'd-\ell_m,m'd+r_m]) \|}{2m'd+\ell_m+r_m+1} \le \frac{\| f(\ZZ\cap[-n,n]) \|}{2n+1} \\
\le \frac{ 2md+\ell_m+r_m+1}{2n+1} \cdot \frac{ \| f([-md-\ell_m,md+r_m] )\| }{2md+\ell_m+r_m+1}.
\end{multline*}
Since $n\to\infty$ implies $m\to\infty$ and $m'\to\infty$, and since
\[ \lim_{n\to\infty} \frac{2m'd+\ell_m+r_m+1}{2n+1} = \lim_{n\to\infty} \frac{ 2md+\ell_m+r_m+1}{2n+1} = 1,\]
taking limit inferior of the above bounds as $n\to\infty$ establishes the result.
\end{proof}

In Section~\ref{sec:intro} we defined a periodic set and its period.
The next lemma gives the density of a periodic set.

\begin{lemma}\label{lem:period}
Let $U$ be a periodic subset of $\ZZ$ with period $d$.
Then $\delta(U) = |U\cap[1,d]|/d$.
\end{lemma}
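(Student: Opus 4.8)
The plan is to reduce the computation of $\delta(U)$ to counting over windows that are exact unions of full periods, and then invoke Lemma~\ref{lem:density} to replace the symmetric windows $[-n,n]$ by these more convenient ones.

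First I would set $k := |U\cap[1,d]|$ and observe, straight from the definition of period, that $|U\cap[id+1,id+d]| = k$ for every $i\in\ZZ$. Consequently, for each positive integer $m$ the interval $[-md+1,md]$ is the disjoint union of the $2m$ blocks $[id+1,id+d]$ with $-m\le i\le m-1$ (consecutive blocks abut with no gap, the leftmost starts at $-md+1$, and the rightmost ends at $md$), so $|U\cap[-md+1,md]| = 2mk$.

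Next I would apply Lemma~\ref{lem:density} with $f=\chi_{\ds U}\ge0$, with the integer of that lemma taken to be our period $d$, with $N=1$, and with the constant sequences $\ell_m=-1$ and $r_m=0$ (which satisfy $-N\le\ell_m,r_m\le N$). For these choices $[-md-\ell_m,md+r_m]=[-md+1,md]$ and the denominator $2md+\ell_m+r_m+1$ equals $2md$, so the lemma yields
\[ \delta(U) = \liminf_{m\to\infty} \frac{\|\chi_{\ds U}(\ZZ\cap[-md+1,md])\|}{2md} = \liminf_{m\to\infty}\frac{2mk}{2md} = \frac{k}{d} = \frac{|U\cap[1,d]|}{d}, \]
which is exactly the claim; since the sequence being limited is constant, its limit inferior is its common value.

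There is no real obstacle here: the only points needing (routine) care are verifying that the $2m$ blocks genuinely tile $[-md+1,md]$ and matching the off-by-one bookkeeping in the hypotheses of Lemma~\ref{lem:density}. For readers who prefer a self-contained argument, one can instead note directly that $[-n,n]$ decomposes into some complete translates of a fundamental block, each contributing exactly $k$ to $|U\cap[-n,n]|$, plus a remainder of fewer than $d$ integers at each end, giving $|U\cap[-n,n]| = \tfrac{k}{d}(2n+1) + O(d)$ and hence the same limit; but the route via Lemma~\ref{lem:density} is shorter.
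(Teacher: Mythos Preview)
Your proof is correct and follows essentially the same approach as the paper: both apply Lemma~\ref{lem:density} to reduce to windows made of full periods. The only cosmetic difference is that you take $\ell_m=-1$, $r_m=0$ so the window $[-md+1,md]$ is exactly $2m$ periods and the ratio is the constant $k/d$, whereas the paper takes $\ell_m=r_m=0$, obtains the sandwich $2mk\le |U\cap[-md,md]|\le 2mk+1$ (the extra endpoint $-md$ may or may not lie in $U$), and lets the squeeze do the rest.
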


\begin{proof}
Since $U$ has period $d$, we have 
\[ 2m|U\cap[1,d]| \le |U\cap[-md,md]|\le 2m|U\cap[1,d]|+1.\]
Applying Lemma~\ref{lem:density} with $\ell_m=r_m=0$ gives the result.
\end{proof}

Now we study dominating sets of the digraph $\Gamma(\ZZ,S)$, where $S\subseteq\ZZ\setminus \{0\}$.
We extend a previous result~\cite[Theorem~4]{IndepRatio} on the domination ratio of an undirected integer distance graph to a directed integer distance graph.  The proof is similar, but requires some minor adjustments to deal with directed edges.

Assume $S$ is a finite subset of $\ZZ\setminus\{0\}$. Let 
\[ a := \max S\cup\{0\}, \quad b:=-\min S\cup\{0\},\quad\text{and}\quad c:=a+b.\]
Also let $[m,n]$ denote the set $\{x\in\ZZ: m\le x\le n\}$.
A \emph{state} is a subset of $[1,c]$.
For every state $T$, there exists a dominating set $D$ of $\Gamma(\ZZ,S)$ such that $D\cap[ic+1,(i+1)c]=T+ic$ for some $i\in\ZZ$.\footnote{
In a more general context~\cite{IndepRatio} the states do not all satisfy this condition, and those which do are called \emph{admissible}.}
For example, the set $D:=T\cup(\ZZ\setminus[1,c])$ satisfies $D\cap[1,c]=T$ and is a dominating set since every integer in $[1,c]=[1,a]\cup[a+1,a+b]$ is dominated by some element of $D$ by the following argument.
\begin{itemize}
\item
If $a=0$ then $[1,a]=\emptyset$. If $a>0$ then $a\in S$ and thus every integer in $[1,a]$ is dominated by some integer in $[1-a,0]\subseteq D$.
\item
If $b=0$ then $[a+1,a+b]=\emptyset$. If $b>0$ then $-b\in S$ and thus every integer in $[a+1,c]$ is dominated by some integer in $[c+1,c+b]\subseteq D$.
\end{itemize}

A \emph{transition} occurs from a state $T$ to another state $T'$ if there exists a dominating set $D$ of $\Gamma(\ZZ,S)$ such that $D\cap[ic+1,(i+1)c]=T+ic$ and $D\cap[(i+1)c+1,(i+2)c]=T'+(i+1)c$ for some $i\in\ZZ$.
We may choose $i=0$, without loss of generality.
Moreover, the definition of $a$ and $b$ implies that a transition occurs from $T$ to $T'$ if and only if every element of $[a+1,c+a]$ is either in $T\cup(T'+c)$ or dominated by $T\cup (T'+c)$, since such an element cannot be dominated by any integer outside $[1,2c]$.

The \emph{state graph} associated with $\Gamma(\ZZ,S)$ is a digraph whose vertices are the states and whose edges are transitions.
The \emph{weight} of a state $T$ is $|T|/c$.
A \emph{doubly infinite walk} in the state graph is a sequence $(T_i:i\in\ZZ)$ of states such that $(T_i,T_{i+1})$ is an edge for all $i\in\ZZ$.
The \emph{lower average weight} of this walk is 
\[ \liminf_{m\to\infty} \sum_{i\in[-m,m]} \frac{ |T_i| }{(2m+1)c}. \]

\begin{proposition}\label{prop:period}
Assume $S$ is a finite subset of $\ZZ\setminus\{0\}$.
Let $a:=\max S\cup\{0\}$, $b:=-\min S\cup\{0\}$, and $c:=a+b$.
Then the domination ratio of $\Gamma(\ZZ,S)$ is achieved by some periodic dominating set with period at most $c2^c$.
\end{proposition}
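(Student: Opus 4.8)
The plan is to translate the problem into a statement about the finite, vertex-weighted state graph introduced above, and then to run a minimum-cycle-mean argument on it. (We may assume $S\neq\emptyset$, so $c\ge1$.)

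First I would make precise the correspondence between dominating sets of $\Gamma(\ZZ,S)$ and doubly infinite walks in the state graph. Given a dominating set $D$, I would set $T_i := (D\cap[ic+1,(i+1)c])-ic$: since each $x\in[ic+a+1,(i+1)c+a]$ is dominated by some $d\in D$ with $d=x$ or $d=x-s$ for some $s\in S$, and since $-b\le s\le a$ forces $d\in[ic+1,(i+2)c]$, the pair $(T_i,T_{i+1})$ meets the transition criterion, so $(T_i:i\in\ZZ)$ is a doubly infinite walk. Conversely, for a doubly infinite walk $(T_i)$ I would check that $D:=\bigcup_i(T_i+ic)$ is a dominating set, because the transition criterion says exactly that each of the intervals $[ic+a+1,(i+1)c+a]$---which tile $\ZZ$---is dominated by $D$; the two constructions are mutually inverse. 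Since $|D\cap[-mc+1,mc]|=\sum_{i=-m}^{m-1}|T_i|$, Lemma~\ref{lem:density} (with $d=c$, $\ell_m=-1$, $r_m=0$) then shows that $\delta(D)$ equals the lower average weight of the walk $(T_i)$, so $\dgamma(\ZZ,S)$ is the infimum of the lower average weight over all doubly infinite walks in the state graph.

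Next I would study this infimum via cycles. The state graph has at most $2^c$ vertices, each of nonnegative weight $|T|/c$; let $\mu^*$ be the least average weight $\big(\sum_{v\in C}|T_v|\big)/\big(c|C|\big)$ over all directed cycles $C$ (well defined, and in $[0,1]$, since $[1,c]$ carries a self-loop of weight $1$). For the lower bound I would show every doubly infinite walk has lower average weight $\ge\mu^*$: for each $m$, peel cycles off the length-$2m$ sub-walk $T_{-m},\dots,T_m$ by the usual stack procedure, writing its $2m+1$ vertices---counted with multiplicity---as the disjoint union of the vertex set of a simple path $P$ and the vertex sets of directed cycles $C_1,\dots,C_r$ with $\sum_\ell|C_\ell|=2m+1-|P|\ge2m+1-2^c$; then, discarding the nonnegative path contribution and using the definition of $\mu^*$ on each cycle,
\[ \sum_{i=-m}^{m}|T_i|\ \ge\ c\mu^*\sum_{\ell=1}^{r}|C_\ell|\ \ge\ c\mu^*(2m+1-2^c),\]
so $\sum_{i=-m}^m|T_i|/((2m+1)c)\to\mu^*$ from above, giving $\dgamma(\ZZ,S)\ge\mu^*$. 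For the matching upper bound I would take a simple cycle $C^*=(U_0,\dots,U_{k-1},U_0)$ of average weight $\mu^*$, with $k\le2^c$; the periodic walk $T_i:=U_{i\bmod k}$ corresponds to the dominating set $D^*:=\bigcup_i(U_{i\bmod k}+ic)$, which is periodic with period dividing $kc\le c2^c$. Since $D^*\cap[1,kc]$ is the disjoint union of the $U_i+ic$ for $0\le i\le k-1$ and so has $\sum_{i=0}^{k-1}|U_i|=ck\mu^*$ elements, Lemma~\ref{lem:period} gives $\delta(D^*)=\mu^*$. Combining the two bounds, $\dgamma(\ZZ,S)=\mu^*=\delta(D^*)$, and $D^*$ is a periodic dominating set of period at most $c2^c$.

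I expect the main obstacle to be the cycle-peeling bookkeeping in the lower bound: one must verify that each vertex occurrence of the finite sub-walk is assigned exactly once to $P$ or to one of the $C_\ell$, so that the nonnegative contribution of $P$ can simply be dropped while the cycles' contribution is bounded below by $c\mu^*\sum_\ell|C_\ell|$. The remaining pieces---the correspondence of the first step and the density bookkeeping---are routine given Lemmas~\ref{lem:density} and~\ref{lem:period}, the only care being to align the windows of Lemma~\ref{lem:density} with whole blocks $[ic+1,(i+1)c]$ so that no boundary terms survive in the limit.
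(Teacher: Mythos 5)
Your proposal is correct and follows essentially the same route as the paper: reduce to doubly infinite walks in the state graph, identify $\delta(D)$ with the lower average weight via Lemma~\ref{lem:density}, and conclude that the optimum is attained by repeating a simple cycle of length at most $2^c$. The only difference is that the paper cites \cite[Lemma~3]{IndepRatio} for the minimum-cycle-mean step, whereas you supply a self-contained cycle-peeling proof of it; that argument is sound (the self-loop at the full state $[1,c]$ guarantees cycles exist, and the discarded simple path has nonnegative weight and bounded length), so it is a valid, slightly more self-contained substitute.
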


\begin{proof}
A doubly infinite walk $(T_i:i\in\ZZ)$ in the state graph of $\Gamma(\ZZ,S)$ corresponds to a set 
\[ D:=\bigcup_{i\in\ZZ} (T_i+ic).\]
We show that $D$ is a dominating set, i.e., every integer $j\notin D$ is dominated by $D$.

We have $j\in [ic+1,(i+1)c]$ for some $i\in\ZZ$.
We first assume $j\in[ic+1,ic+a]$.
Then $j$ cannot be dominated by any integer outside $[(i-1)c+1,(i+1)c]$ by the definition of $a,b,c$.
Since $(T_{i-1},T_i)$ is an edge in the state graph, there exists a dominating set $D'$ such that
\begin{align*}
D'\cap[(i-1)c+1,ic] &= T_{i-1}+(i-1)c,\\
D'\cap[ic+1,(i+1)c] &= T_i+ic.
\end{align*}
Thus $j$ must be dominated by $\left( T_{i-1}+(i-1)c\right) \cup \left( T_i+ic \right) = D\cap [(i-1)c+1,(i+1)c]$.

We next assume $j\in[ic+a+1,(i+1)c]$. 
Then $j$ cannot be dominated by any integer outside $[ic+1,(i+2)c]$ by the definition of $a,b,c$.
Since $(T_i, T_{i+1})$ is an edge in the transition graph, a similar argument as above implies that $j$ must be dominated by $D\cap[ic+1,(i+2)c]$.
Thus $D$ is a dominating set of $\Gamma(\ZZ,S)$.

Conversely, a dominating set $D$ of $\Gamma(\ZZ,S)$ corresponds to a doubly infinite walk $(T_i:i\in\ZZ)$ in the state graph, where $T_i:=D\cap[ic+1,(i+1)c]$.
The lower average weight of this walk equals
\[ \liminf_{m\to\infty} \sum_{i\in[-m,m]} \frac{|T_i|}{(2m+1)c} 
= \liminf_{m\to\infty} \frac{|D\cap [-mc+1,mc+c]|}{(2m+1)c} = \delta(D) \]
where the last equality follows from Lemma~\ref{lem:density}.

We know that the infimum of the lower average weights of doubly infinite walks is achieved by repeating some simple cycle in the state graph~\cite[Lemma~3]{IndepRatio}.
The length of this cycle is at most $2^c$, the total number of states.
Thus $\dgamma(\ZZ,S)$ is achieved by some periodic dominating set with period at most $c2^c$.
\end{proof}

By Proposition~\ref{prop:period}, the domination ratio of $\Gamma(\ZZ,S)$ is achieved by some periodic dominating set, whose period is denoted by $p$.
Let $\ZZ_p:=\{1,2,\ldots,p\}$ be the cyclic group of order $p$ under addition modulo $p$ and let $S_p$ be the subset of $\ZZ_p$ consisting of all least positive residues of elements in $S$ modulo $p$.
We conclude this section by giving a relation between the domination ratio $\dgamma(\ZZ,S)$ of the integer distance digraph $\Gamma(\ZZ,S)$ and the domination number $\gamma(\ZZ_p,S_p)$ of the circulant digraph $\Gamma(\ZZ_p,S_p)$.

\begin{proposition}\label{prop:finite}
Assume $S$ is a finite subset of $\ZZ\setminus\{0\}$.
Let $D$ be a dominating set of $\Gamma(\ZZ,S)$ with period $p$ such that $\dgamma(\ZZ,S) = \delta(D)= |D\cap[1,p]|/p$.
Then $D\cap[1,p]$ is a minimum dominating set of $\Gamma(\ZZ_p,S_p)$ and $\gamma(\ZZ_p,S_p) = |D\cap[1,p]| = \dgamma(\ZZ,S) p$.
\end{proposition}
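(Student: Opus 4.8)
The plan is to prove the two assertions separately: first that $D\cap[1,p]$ \emph{is} a dominating set of $\Gamma(\ZZ_p,S_p)$ (giving $\gamma(\ZZ_p,S_p)\le|D\cap[1,p]|$), and then that no dominating set of $\Gamma(\ZZ_p,S_p)$ can be smaller. Throughout I identify $\ZZ_p$ with the set of least positive residues $\{1,\dots,p\}$ and write $\pi\colon\ZZ\to\ZZ_p$ for the map sending an integer to its least positive residue modulo $p$. The single elementary fact underlying everything is that $\pi$ carries edges of $\Gamma(\ZZ,S)$ to edges of $\Gamma(\ZZ_p,S_p)$: if $(x,x+s)$ is an edge with $s\in S$, then $\pi(x+s)=\pi(x)+\pi(s)$ in $\ZZ_p$ and $\pi(s)\in S_p$ by the definition of $S_p$, so $(\pi(x),\pi(x+s))$ is an edge of $\Gamma(\ZZ_p,S_p)$. (If $p\mid s$ then $\pi(s)$ is the identity of $\ZZ_p$, which only creates a loop and, as noted in Section~\ref{sec:intro}, has no effect on domination.) I will also use repeatedly that, since $D$ has period $p$, one has $x\in D\Rightarrow\pi(x)\in D$.

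For the first assertion, let $v\in\ZZ_p$ and view it as the integer $v\in[1,p]\subseteq\ZZ$. It is dominated in $\Gamma(\ZZ,S)$ by some $x\in D$, i.e.\ $x=v$ or $x+s=v$ for some $s\in S$. Put $x':=\pi(x)\in D\cap[1,p]$. Applying $\pi$ to the relation "$x=v$ or $x+s=v$" and using the edge‑correspondence fact together with $\pi(v)=v$, we see that $x'$ dominates $v$ in $\Gamma(\ZZ_p,S_p)$. Hence $D\cap[1,p]$ is a dominating set of $\Gamma(\ZZ_p,S_p)$, so $\gamma(\ZZ_p,S_p)\le|D\cap[1,p]|$; and by Lemma~\ref{lem:period} (applied with period $p$) together with the hypothesis $\dgamma(\ZZ,S)=\delta(D)$ we get $|D\cap[1,p]|=\delta(D)\,p=\dgamma(\ZZ,S)\,p$.

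For the second assertion I run the correspondence in reverse. Given any dominating set $D'$ of $\Gamma(\ZZ_p,S_p)$, set $\widetilde D:=\{x\in\ZZ:\pi(x)\in D'\}=D'+p\ZZ$, a subset of $\ZZ$ invariant under translation by $p$, hence periodic with period dividing $p$ and satisfying $\widetilde D\cap[1,p]=D'$. I claim $\widetilde D$ dominates $\Gamma(\ZZ,S)$: given $v\in\ZZ$, choose $\bar u\in D'$ dominating $\pi(v)$; if $\bar u=\pi(v)$ then $v\in\widetilde D$, and if $\bar u+\pi(s)=\pi(v)$ for some $s\in S$ then $u:=v-s$ satisfies $\pi(u)=\bar u\in D'$, so $u\in\widetilde D$ and $(u,v)=(u,u+s)$ is an edge of $\Gamma(\ZZ,S)$. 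Thus $\widetilde D$ is a dominating set, so $\delta(\widetilde D)\ge\dgamma(\ZZ,S)$ by the definition of the domination ratio as an infimum. On the other hand, since the exact period $d$ of $\widetilde D$ divides $p$, Lemma~\ref{lem:period} gives $\delta(\widetilde D)=|\widetilde D\cap[1,d]|/d=|\widetilde D\cap[1,p]|/p=|D'|/p$. Combining, $|D'|/p\ge\dgamma(\ZZ,S)=|D\cap[1,p]|/p$, i.e.\ $|D'|\ge|D\cap[1,p]|$. Taking $D'$ to be a minimum dominating set yields $\gamma(\ZZ_p,S_p)\ge|D\cap[1,p]|$, and with the first assertion this gives $\gamma(\ZZ_p,S_p)=|D\cap[1,p]|=\dgamma(\ZZ,S)\,p$; in particular $D\cap[1,p]$ is a minimum dominating set of $\Gamma(\ZZ_p,S_p)$.

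I do not expect a serious obstacle here; the only points requiring care are the bookkeeping in both directions of the edge correspondence under the least‑positive‑residue convention defining $S_p$ (including the degenerate possibility that some $s\in S$ reduces to the identity of $\ZZ_p$), and the routine check, via Lemma~\ref{lem:period}, that a set with period dividing $p$ has density $|{\cdot}\cap[1,p]|/p$.
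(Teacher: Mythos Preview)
Your proof is correct and follows essentially the same two-step approach as the paper: project $D$ to a dominating set of $\Gamma(\ZZ_p,S_p)$ for the upper bound, and lift a minimum dominating set of $\Gamma(\ZZ_p,S_p)$ to a periodic dominating set of $\Gamma(\ZZ,S)$ for the lower bound, then squeeze via the hypothesis $\dgamma(\ZZ,S)=|D\cap[1,p]|/p$. The only differences are expository---you make the residue map $\pi$ and the invocation of Lemma~\ref{lem:period} explicit, and you handle the degenerate loop case $p\mid s$, which the paper leaves implicit.
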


\begin{proof}
We first show that $D\cap[1,p]$ is a dominating set of $\Gamma(\ZZ_p,S_p)$.
To see this, let $i\in[1,p]\setminus D$.
Then $i$ is dominated by $i-s\in D$ for some $s\in S$.
There exists $j\in D\cap[1,p]$ such that $i-s\equiv j\pmod p$, since $p$ is the period of $D$.
Then $i$ is dominated by $j$ in $\Gamma(\ZZ_p,S_p)$ since $i-j\equiv s\pmod p$. 
Thus $D\cap[1,p]$ is a dominating set of $\Gamma(\ZZ_p,S_p)$.

Now let $E$ be a minimum dominating set of $\Gamma(\ZZ_p,S_p)$.
We show that $\overline{E}:= \cup_{k\in\ZZ} (E+kp)$ is a dominating set of $\Gamma(\ZZ,S)$.
To see this, let $i\in\ZZ\setminus \overline{E}$. 
There exists $i'\in [1,p]$ such that $i\equiv i'\pmod p$.
Since $E$ is a dominating set of $\Gamma(\ZZ_p,S_p)$, there exists some $j'\in E$ such that $i'-j'\in S_p$.
This implies that $i-j\in S$ for some $j\equiv j'\pmod p$, i.e., $j$ dominates $i$. 
We also have $j\in \overline{E}$ by the definition of $\overline{E}$.
Thus $\overline{E}$ is a dominating set of $\Gamma(\ZZ,S)$.

Combining the above two paragraphs we have
\[ \dgamma(\ZZ,S)\le \delta(\overline{E}) = |E|/p = \gamma(\ZZ_p,S_p)/p \le |D\cap[1,p]|/p =\dgamma(\ZZ,S) \]
where the two inequalities must both be equalities.
The result follows immediately from this.
\end{proof}

\section{Basic results on domination ratio}\label{sec:basic}

In this section we prove some basic results on the domination ratio of an integer distance graph, as summarized in Proposition~\ref{prop:basic}.

\begin{lemma}\label{lem:subset}
Suppose $S\subseteq S'\subseteq \ZZ\setminus\{0\}$. 
Then $\dgamma(\ZZ,S) \ge \dgamma(\ZZ,S')$.
\end{lemma}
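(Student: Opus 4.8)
The plan is to show that every dominating set of $\Gamma(\ZZ,S)$ is automatically a dominating set of $\Gamma(\ZZ,S')$, so that the infimum defining $\dgamma(\ZZ,S')$ is taken over a family containing all the sets considered for $\dgamma(\ZZ,S)$; an infimum over a larger family can only be smaller or equal, which yields the claimed inequality.

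First I would unwind the definition of domination in a digraph. In $\Gamma(\ZZ,S)$, a vertex $u$ dominates a vertex $v$ exactly when $u=v$ or $(u,v)$ is an edge, i.e.\ $v=u+s$ for some $s\in S$, equivalently $v-u\in S$. Since $S\subseteq S'$, the condition $v-u\in S$ forces $v-u\in S'$, and hence $u$ also dominates $v$ in $\Gamma(\ZZ,S')$. Therefore, if $D\subseteq\ZZ$ dominates every vertex of $\Gamma(\ZZ,S)$, then the very same set $D$ dominates every vertex of $\Gamma(\ZZ,S')$; that is, every dominating set of $\Gamma(\ZZ,S)$ is a dominating set of $\Gamma(\ZZ,S')$.

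I would then conclude by monotonicity of the infimum. Writing $\mathcal{D}(S)$ and $\mathcal{D}(S')$ for the families of all dominating sets of the two digraphs, the previous step gives $\mathcal{D}(S)\subseteq\mathcal{D}(S')$, so
\[ \dgamma(\ZZ,S') = \inf_{D\in\mathcal{D}(S')} \delta(D) \le \inf_{D\in\mathcal{D}(S)} \delta(D) = \dgamma(\ZZ,S). \]
There is essentially no obstacle here; the only point deserving a moment of care is the direction of the edges in the digraph definition of domination, to confirm that enlarging $S$ really only adds outgoing edges and thus can only make domination easier. Note that neither the hypothesis $0\notin S'$ nor finiteness of $S$ or $S'$ is used in the argument.
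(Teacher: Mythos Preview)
Your proof is correct and follows essentially the same approach as the paper's: the paper simply notes that $\Gamma(\ZZ,S)$ is a subgraph of $\Gamma(\ZZ,S')$, so every dominating set of the former is a dominating set of the latter, and the inequality follows. Your version just unwinds this observation more explicitly.
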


\begin{proof}
Since $S\subseteq S'\subseteq \ZZ\setminus\{0\}$, the Cayley graph $\Gamma(\ZZ,S)$ is a subgraph of $\Gamma(\ZZ,S')$.
Thus a dominating set of $\Gamma(\ZZ,S)$ is also a dominating set of $\Gamma(\ZZ,S')$.
The result follows immediately.
\end{proof}

\begin{proposition}\label{prop:neg}
For any $S\subseteq \ZZ\setminus\{0\}$ we have $\dgamma(\ZZ,S) = \dgamma(\ZZ,-S)$. 
\end{proposition}

\begin{proof}
The automorphism of the group $\ZZ$ defined by $i\mapsto -i$ for all $i\in\ZZ$ induces an isomorphism between the two digraphs $\Gamma(\ZZ,S)$ and $\Gamma(\ZZ,-S)$.
The result follows immediately.
\end{proof}

\begin{proposition}\label{prop:efficient}
Let $S$ be a finite subset of $\ZZ\setminus\{0\}$.
Then $\dgamma(\ZZ,S) \ge 1/(|S|+1)$ and the equality holds when there exists an efficient dominating set of $\Gamma(\ZZ,S)$.
\end{proposition}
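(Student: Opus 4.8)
The plan is to prove the two assertions of Proposition~\ref{prop:efficient} separately: first the lower bound $\dgamma(\ZZ,S)\ge 1/(|S|+1)$, and then the equality under the existence of an efficient dominating set. Write $d:=|S|$. For the lower bound, fix any dominating set $D$ of $\Gamma(\ZZ,S)$. The key observation is that each vertex $u\in D$ dominates at most $d+1$ vertices, namely $u$ itself and the vertices $u+s$ for $s\in S$, and these are the only vertices of $\ZZ$ that $u$ can dominate. Hence for any integer $n$, every vertex in $[-n,n]$ must be dominated by some element of $D$ lying in the enlarged window $[-n-M,n+M]$, where $M:=\max_{s\in S}|s|$ (since a dominator of $v\in[-n,n]$ differs from $v$ by an element of $S\cup\{0\}$). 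This gives
\[ 2n+1 = |\ZZ\cap[-n,n]| \le (d+1)\,|D\cap[-n-M,n+M]|.\]
Dividing by $2n+1$ and taking $\liminf$ as $n\to\infty$, and using that $(2n+2M+1)/(2n+1)\to 1$, yields $1\le (d+1)\,\delta(D)$, i.e. $\delta(D)\ge 1/(d+1)$. Taking the infimum over all dominating sets $D$ gives $\dgamma(\ZZ,S)\ge 1/(d+1)$.

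For the equality statement, suppose $\Gamma(\ZZ,S)$ has an efficient dominating set $D$; I must exhibit such a $D$ with $\delta(D)=1/(d+1)$, which combined with the lower bound finishes the proof. The idea is that efficiency forces exact packing: every vertex is dominated by \emph{exactly one} element of $D$, so the sets $\{u\}\cup(u+S)$ for $u\in D$ partition $\ZZ$. For each $n$, the dominators of the $2n+1$ vertices of $[-n,n]$ all lie in $[-n-M,n+M]$ and each dominates at most $d+1$ of them, so $|D\cap[-n-M,n+M]|\ge (2n+1)/(d+1)$; on the other hand each element of $D\cap[-n+M,n-M]$ dominates $d+1$ vertices all inside $[-n,n]$, which are pairwise distinct across different dominators by efficiency, so $(d+1)\,|D\cap[-n+M,n-M]|\le 2n+1$. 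Combining the two and letting $n\to\infty$ (again the ratio of window lengths tends to $1$) pins down $\delta(D)=1/(d+1)$ exactly. One could alternatively invoke Proposition~\ref{prop:period} to replace $D$ by a periodic efficient dominating set with period $p$ and then simply count: efficiency means $p/(d+1)$ of the residues lie in $D$, so by Lemma~\ref{lem:period} $\delta(D)=|D\cap[1,p]|/p=1/(d+1)$; this is cleaner if we are willing to lean on the earlier machinery.

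The only slightly delicate point — and the place I would be most careful — is the bookkeeping with the boundary windows: one must be sure the $\liminf$ manipulations are valid, i.e. that we are comparing $|D\cap[-n,n]|$ with quantities whose normalizing denominators are asymptotically $2n+1$. This is routine given the definition~\eqref{eq:density} of $\delta$ and the standard fact recalled at the start of Section~\ref{sec:density} that $\liminf x_n\ge\liminf y_n$ when $x_n\ge y_n$ eventually, together with $\lim (2n\pm 2M+1)/(2n+1)=1$. No genuine obstacle is expected; the proposition is essentially the infinite-density analogue of the finite sphere-packing bound $\gamma(\Gamma)\ge n/(1+d)$ already noted in the introduction.
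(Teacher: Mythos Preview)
Your proposal is correct and follows essentially the same counting argument as the paper's proof: bound the number of vertices dominated in a window of length $2n+1$ by $(|S|+1)$ times the number of dominators in a boundedly enlarged (or shrunk) window, then let $n\to\infty$. The only cosmetic difference is that the paper keeps the window $[-n,n]$ fixed and tracks the at most $2N$ boundary vertices lost or gained, whereas you shift the window by $\pm M$; both lead to the same estimates.
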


\begin{proof}
Let $D$ be a dominating set of $\Gamma(\ZZ,S)$.
Since $S$ is finite, there exists $N>0$ such that $S\subseteq[-N,N]$.
Let $n$ be a positive integer larger than $N$.
Any $j\in\ZZ\cap[-n,n]\setminus D$ is dominated by some $i\in D$.
If $i<-n$ then $j<-n+N$; if $i>n$ then $j>n-N$.
Thus $D\cap[-n,n]$ dominates all but at most $2N$ elements of $[-n,n]$.
Each element of $D\cap[-n,n]$ can dominate at most $(|S|+1)$ elements.
It follows that 
\[ (|S|+1) |D\cap [-n,n]| \ge 2n+1-2N.\]
Hence
\[ \delta(D) = \liminf_{n\to\infty} \frac{|D\cap[-n,n]|}{2n+1} \ge \liminf_{n\to\infty} \frac{2n+1-2N}{(2n+1)(|S|+1)} = \frac1{|S|+1}.\]
Since $D$ is an arbitrary dominating set of $\Gamma(\ZZ,S)$, we have $\dgamma(\ZZ,S)\ge 1/(|S|+1)$.

Now suppose that there is an efficient dominating set $D$ of $\Gamma(\ZZ,S)$.
Similarly as above, counting all vertices dominated by $D\cap[-n,n]$ gives
\[ (|S|+1) |D\cap [-n,n]| \le 2n+1+2N. \]
This implies $\dgamma(\ZZ,S)\le \delta(D) \le 1/(|S|+1)$.
\end{proof}

We next show that, for any nonnegative integer $s$, there exists $S\subseteq\ZZ\setminus\{0\}$ with $|S|=s$ such that $\Gamma(\ZZ,S)$ admits an efficient dominating set and thus has domination ratio $\dgamma(\ZZ,S)=1/(s+1)$.

\begin{proposition}\label{prop:cong}
Suppose $S = \{ i_1(s+1)+1,i_2(s+1)+2,\ldots,i_s(s+1)+s\}$, where $s=|S|\ge0$ and $i_1,\ldots,i_s\in\ZZ$.
Then $\Gamma(\ZZ,S)$ has an efficient dominating set $(s+1)\ZZ$ and thus $\dgamma(\ZZ,S)=1/(s+1)$.
\end{proposition}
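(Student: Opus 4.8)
The plan is to verify directly that $D := (s+1)\ZZ$ is an efficient dominating set of $\Gamma(\ZZ,S)$ and then invoke Proposition~\ref{prop:efficient}. The single observation driving everything is that, writing $s_k := i_k(s+1)+k$ for $k=1,\dots,s$, we have $s_k \equiv k \pmod{s+1}$, so $S\cup\{0\} = \{0,s_1,\dots,s_s\}$ contains exactly one representative of each residue class modulo $s+1$.

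First I would recall that a vertex $g$ dominates a vertex $j$ in $\Gamma(\ZZ,S)$ precisely when $j-g\in S\cup\{0\}$. Fix an arbitrary $j\in\ZZ$ and let $m$ be the unique element of $S\cup\{0\}$ with $m\equiv j\pmod{s+1}$. Then $g:=j-m$ is divisible by $s+1$, so $g\in D$, and $j-g=m\in S\cup\{0\}$, so $g$ dominates $j$; this already shows $D$ is a dominating set. For uniqueness, suppose $g'\in D$ also dominates $j$. Since $g'\equiv 0\pmod{s+1}$ we get $j-g'\equiv j\pmod{s+1}$, and $j-g'\in S\cup\{0\}$, so the defining property of $m$ forces $j-g'=m$, hence $g'=j-m=g$. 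Thus every integer is dominated by exactly one element of $D$, i.e., $D$ is an efficient dominating set.

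Finally, Proposition~\ref{prop:efficient} gives $\dgamma(\ZZ,S)=1/(|S|+1)=1/(s+1)$ (alternatively $\delta(D)=\delta((s+1)\ZZ)=1/(s+1)$ by Lemma~\ref{lem:period}). The degenerate case $s=0$, where $S=\emptyset$ and $D=\ZZ$, is covered by the same reasoning. There is no real obstacle here; the only point requiring care is the bookkeeping in the uniqueness step, where membership of $g$ and $g'$ in $(s+1)\ZZ$ is exactly what pins $j-g$ and $j-g'$ to the same residue class modulo $s+1$ and hence forces both to equal $m$.
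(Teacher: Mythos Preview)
Your proof is correct and follows essentially the same approach as the paper: both rest on the observation that $S\cup\{0\}$ is a complete residue system modulo $s+1$, then verify existence and uniqueness of a dominator in $(s+1)\ZZ$ for each integer, and conclude via Proposition~\ref{prop:efficient}. Your version is slightly cleaner in that it treats $S\cup\{0\}$ uniformly rather than splitting into the cases $r=0$ and $r\neq 0$, but the underlying argument is the same.
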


\begin{proof}
When $s=0$ the digraph $\Gamma(\ZZ,S)=\Gamma(\ZZ,\emptyset)$ admits an efficient dominating set $\ZZ$ and thus has domination ratio $1$.
Assume $s\ge1$ below.
Each $j\in\ZZ$ can be written as $j=k(s+1)+r$ for some $k\in\ZZ$ and $r\in[0,s]$.
If $r=0$ then $j\in (s+1)\ZZ$ and other elements of $(s+1)\ZZ$ cannot dominate $j$, since $S$ contains no element congruent to $0$ modulo $s+1$.
If $r\ne0$ then $j$ is dominated by $(k-i_r)(s+1)\in(s+1)\ZZ$ since 
\[ (k-i_r)(s+1)+(i_r(s+1)+r)=k(s+1)+r=j \]
and other elements of $(s+1)\ZZ$ cannot dominate $j$, since $S$ contains exactly one element congruent to $r$ modulo $s+1$.
Thus $(s+1)\ZZ$ is an efficient dominating set of $\Gamma(\ZZ,S)$.
It follows from Proposition~\ref{prop:efficient} that $\dgamma(\ZZ,S)=1/(s+1)$.
\end{proof}

\begin{proposition}\label{prop:divisor}
Let $d$ be a common divisor of all elements of $S$.
Then $\dgamma(\ZZ,S/d) = \dgamma(\ZZ,S)$.
\end{proposition}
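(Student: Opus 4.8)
The plan is to prove the two inequalities $\dgamma(\ZZ,S)\le\dgamma(\ZZ,S/d)$ and $\dgamma(\ZZ,S/d)\le\dgamma(\ZZ,S)$ separately. We may assume $d$ is a positive integer (if $d$ is negative, replace it by $-d$ and use Proposition~\ref{prop:neg}), and the case $d=1$ is trivial. The key structural observation is that, since $d$ divides every element of $S$, every edge of $\Gamma(\ZZ,S)$ joins two integers in the same residue class modulo $d$; hence $\Gamma(\ZZ,S)$ is the disjoint union of the $d$ subgraphs induced on the residue classes $d\ZZ+r$ for $r=0,1,\ldots,d-1$, each of which is isomorphic to $\Gamma(\ZZ,S/d)$ via the map $di+r\mapsto i$.

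For $\dgamma(\ZZ,S)\le\dgamma(\ZZ,S/d)$, I would start from an arbitrary dominating set $D'$ of $\Gamma(\ZZ,S/d)$ and ``blow it up'' by setting $D:=\bigcup_{i\in D'}\{di,di+1,\ldots,di+d-1\}$. To see that $D$ dominates $\Gamma(\ZZ,S)$, write an arbitrary $j\in\ZZ$ as $j=di+e$ with $0\le e\le d-1$; if $i\in D'$ then $j\in D$, and otherwise $i$ is dominated in $\Gamma(\ZZ,S/d)$ by some $i-s'\in D'$ with $s'\in S/d$, so $j-ds'=d(i-s')+e\in D$ and $ds'\in S$, i.e.\ $j$ is dominated by $j-ds'\in D$. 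Then I would check $\delta(D)=\delta(D')$: the interval $[-md,md]$ meets $D$ in exactly $d$ integers for each $i\in D'\cap[-m,m-1]$, plus a bounded correction at the two endpoints, so $|D\cap[-md,md]|=d\,|D'\cap[-m,m-1]|+O(1)$, and Lemma~\ref{lem:density} (applied once with the given $d$ and $\ell_m=r_m=0$, and once with the ``$d$'' of the lemma equal to $1$) turns this into $\delta(D)=\delta(D')$. Taking the infimum over all $D'$ yields the inequality.

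For the reverse inequality $\dgamma(\ZZ,S/d)\le\dgamma(\ZZ,S)$, I would take an arbitrary dominating set $D$ of $\Gamma(\ZZ,S)$ and, for each $r\in\{0,1,\ldots,d-1\}$, define $D'_r:=\{i\in\ZZ:di+r\in D\}$. By the structural observation each $D'_r$ is a dominating set of $\Gamma(\ZZ,S/d)$, hence $\delta(D'_r)\ge\dgamma(\ZZ,S/d)$. The crux is then the density inequality $\delta(D)\ge\frac1d\sum_{r=0}^{d-1}\delta(D'_r)$. To establish it, use Lemma~\ref{lem:density} to write $\delta(D)=\liminf_{m\to\infty}|D\cap[-md,md]|/(2md+1)$, decompose $|D\cap[-md,md]|=\sum_{r=0}^{d-1}|D'_r\cap I_{m,r}|$ where each $I_{m,r}$ is an interval differing from $[-m,m]$ by at most one integer at the ends, absorb these bounded corrections and the harmless discrepancy between $2md+1$ and $d(2m+1)$, and finally apply the superadditivity $\liminf\sum\ge\sum\liminf$. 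Combining everything gives $\delta(D)\ge\frac1d\cdot d\cdot\dgamma(\ZZ,S/d)=\dgamma(\ZZ,S/d)$, and taking the infimum over all $D$ completes the proof.

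The main obstacle is the density bookkeeping in the reverse inequality: one must keep in mind that $\liminf$ is only superadditive, but this is precisely the direction needed here, and Lemma~\ref{lem:density} is exactly the tool that lets one evaluate $\delta(D)$ on the ``$d$-aligned'' intervals $[-md,md]$ so that the residue-class decomposition becomes clean. Alternatively, one could first invoke Proposition~\ref{prop:period} to replace both domination ratios by periodic dominating sets, for which Lemma~\ref{lem:period} makes every density an exact rational $|U\cap[1,p]|/p$ and the residue decomposition an honest identity, sidestepping the $\liminf$ subtleties entirely.
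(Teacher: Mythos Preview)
Your proof is correct and rests on the same structural observation as the paper's: since every element of $S$ is divisible by $d$, the graph $\Gamma(\ZZ,S)$ decomposes as the disjoint union of its $d$ residue-class subgraphs, each isomorphic to $\Gamma(\ZZ,S/d)$. The paper simply states this decomposition and declares that ``the result then follows,'' whereas you carry out explicitly the density bookkeeping (blow-up in one direction, residue projection with $\liminf$ superadditivity in the other) that the paper leaves to the reader; so your argument is the same approach, just filled in with more care.
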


\begin{proof}
We may assume $d>0$ by Proposition~\ref{prop:neg}.
For each integer $k\in[0,d-1]$, let $\Gamma_k$ be the subgraph of $\Gamma(\ZZ,S)$ whose vertex set is 
\[ [k]_d:=\{i\in\ZZ: i\equiv k\pmod d\}\]
and whose edge set consists of all ordered pairs $(i,j)$ with $i,j\in[k]_d$ and $j-i\in S$. 

Since $j-i\in S$ implies $i\equiv j \pmod d$ for any $i,j\in\ZZ$, there is no edge between $\Gamma_k$ and $\Gamma_\ell$ if $k\ne \ell$.
If $i\equiv j\pmod d$ then
\[ j-i\in S \Leftrightarrow \lfloor j/d \rfloor d - \lfloor i/d \rfloor d \in S \Leftrightarrow \lfloor j/d \rfloor - \lfloor i/d \rfloor \in S/d.\]
Thus for each integer $k\in[0,d-1]$, sending $i\in[k]_d$ to $\lfloor i/d \rfloor$ gives an isomorphism between $\Gamma_k$ and $\Gamma(\ZZ,S/d)$.

Therefore $\Gamma(\ZZ,S)$ is isomorphic to the union of $d$ copies of $\Gamma(\ZZ,S/d)$. 
The result then follows.
\end{proof}

\begin{corollary}\label{cor:01}
If $|S|\le1$ then $\dgamma(\ZZ,S)=1/(|S|+1)$.
If $|S|\ge1$ then $\dgamma(\ZZ,S)\le 1/2$.
\end{corollary}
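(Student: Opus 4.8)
The plan is to treat the two assertions separately and to reduce both of them to results already established: the efficient dominating set produced by Proposition~\ref{prop:cong}, the lower bound in Proposition~\ref{prop:efficient}, and the monotonicity of the domination ratio under enlarging $S$ from Lemma~\ref{lem:subset}.

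For the first assertion I would split on $|S|$. When $|S|=0$, the digraph $\Gamma(\ZZ,\emptyset)$ has no edges, so its unique dominating set is $\ZZ$ itself, which has density $1 = 1/(0+1)$; alternatively this is the $s=0$ instance of Proposition~\ref{prop:cong}. When $|S|=1$, write $S=\{s\}$ with $s\ne 0$. Using Proposition~\ref{prop:neg} I may assume $s>0$, and then applying Proposition~\ref{prop:divisor} with the common divisor $d=s$ gives $\dgamma(\ZZ,\{s\}) = \dgamma(\ZZ,\{1\})$. Now Proposition~\ref{prop:cong} (the case $|S|=1$, $i_1=0$) exhibits $2\ZZ$ as an efficient dominating set of $\Gamma(\ZZ,\{1\})$, whence $\dgamma(\ZZ,\{1\}) = 1/2$ by Proposition~\ref{prop:efficient}. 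This yields $\dgamma(\ZZ,S) = 1/2 = 1/(|S|+1)$ in this case.

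For the second assertion, given $|S|\ge 1$ I would pick any element $s\in S$, so that $\{s\}\subseteq S\subseteq \ZZ\setminus\{0\}$. Lemma~\ref{lem:subset} then gives $\dgamma(\ZZ,S)\le \dgamma(\ZZ,\{s\})$, and the singleton case just established shows $\dgamma(\ZZ,\{s\}) = 1/2$; hence $\dgamma(\ZZ,S)\le 1/2$. Note this argument is valid whether or not $S$ is finite.

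There is no serious obstacle here: the whole corollary is a short bookkeeping exercise assembled from earlier results. The only point that needs a little care is the singleton case $S=\{s\}$ for arbitrary $s$ (possibly negative, possibly even), where Proposition~\ref{prop:cong} does not apply directly; but Propositions~\ref{prop:neg} and~\ref{prop:divisor} reduce it to $S=\{1\}$ cleanly, so even this step is routine.
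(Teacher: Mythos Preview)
Your proof is correct and follows essentially the same route as the paper, which simply cites Lemma~\ref{lem:subset}, Proposition~\ref{prop:cong}, and Proposition~\ref{prop:divisor}. Your explicit appeals to Proposition~\ref{prop:neg} and Proposition~\ref{prop:efficient} are harmless but unnecessary: Proposition~\ref{prop:divisor} already allows any (possibly negative) common divisor, so $\{s\}$ reduces directly to $\{1\}$, and Proposition~\ref{prop:cong} already states $\dgamma(\ZZ,S)=1/(s+1)$ outright rather than merely exhibiting an efficient dominating set.
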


\begin{proof}
This follows from Lemma~\ref{lem:subset}, Proposition~\ref{prop:cong}, and Proposition~\ref{prop:divisor}.
\end{proof}

Let $S$ be a nonempty subset of $\ZZ\setminus\{0\}$ and define $\pm S:=\{\pm s:s\in S\}$. 
Then $\Gamma(\ZZ,\pm S)$ can be viewed as an undirected graph.
There is a result~\cite[Lemma~19]{IndepRatio} similar to Proposition~\ref{prop:divisor} for the independence ratio of $\Gamma(\ZZ,\pm S)$.
Moreover, the previous results in this section imply the following results on the domination ratio of $\Gamma(\ZZ,\pm S)$.

\begin{proposition}\label{prop:undirected}
{\normalfont(i)} For any integer $s\ge1$ we have $\dgamma(\ZZ,\{\pm s\}) = 1/3$.

{\normalfont(ii)} For any nonempty set $S\subseteq \ZZ\setminus\{0\}$ we have $1/(|\pm S|+1) \le \dgamma(\ZZ,\pm S)\le 1/3$.

{\normalfont(iii)} If $s=5k\pm2$ for some $k\in\ZZ$ then $\dgamma(\ZZ,\{\pm1,\pm s\}) = 1/5$.
\end{proposition}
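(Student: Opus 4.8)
The plan is to derive all three parts directly from results already established in this section --- chiefly Proposition~\ref{prop:efficient}, Proposition~\ref{prop:cong}, Proposition~\ref{prop:divisor}, and Lemma~\ref{lem:subset} --- so that no genuinely new argument is required. For (i), I would first reduce to the case $s=1$: the positive integer $s$ is a common divisor of the two elements of $\{\pm s\}$, so Proposition~\ref{prop:divisor} gives $\dgamma(\ZZ,\{\pm s\})=\dgamma(\ZZ,\{\pm 1\})$. Then I would observe that $\{\pm 1\}=\{1,-1\}$ has the form demanded by Proposition~\ref{prop:cong} with $|S|=2$ (hence $|S|+1=3$): taking $i_1=0$ produces the element $0\cdot 3+1=1$ and $i_2=-1$ produces $(-1)\cdot 3+2=-1$. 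That proposition then gives that $3\ZZ$ is an efficient dominating set of $\Gamma(\ZZ,\{\pm1\})$ and $\dgamma(\ZZ,\{\pm1\})=1/3$. (Equivalently, $\Gamma(\ZZ,\{\pm1\})$ is the two-way infinite path, for which $3\ZZ$ is visibly a perfect code.)

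For (ii), the upper bound is immediate: picking any $s\in S$ gives $\{\pm s\}\subseteq \pm S\subseteq\ZZ\setminus\{0\}$, so Lemma~\ref{lem:subset} together with part (i) yields $\dgamma(\ZZ,\pm S)\le \dgamma(\ZZ,\{\pm s\})=1/3$. For the lower bound I would split on whether $\pm S$ is finite: if it is infinite then $1/(|\pm S|+1)=0\le\dgamma(\ZZ,\pm S)$ holds trivially since densities are nonnegative, and if it is finite the bound is exactly the first inequality of Proposition~\ref{prop:efficient} applied to $\pm S$.

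For (iii), write $S=\{\pm1,\pm s\}$ with $s=5k\pm2$. Since $\pm S$ is unchanged when $s$ is replaced by $-s$, I may assume $s\ge1$; and because $5k\pm2=1$ has no integer solution we get $s\ge2$, so $|S|=4$. I would then verify that $S$ fits the hypothesis of Proposition~\ref{prop:cong} with $|S|=4$ (hence $|S|+1=5$): modulo $5$ the elements $1$ and $-1$ are congruent to $1$ and $4$, while $s$ and $-s$ are congruent to $2$ and $3$ in one order or the other, precisely because $s\equiv\pm2\pmod 5$. Reading off the appropriate multiples of $5$ gives integers $i_1,i_2,i_3,i_4$ with $S=\{i_1\cdot5+1,\,i_2\cdot5+2,\,i_3\cdot5+3,\,i_4\cdot5+4\}$, so Proposition~\ref{prop:cong} shows $5\ZZ$ is an efficient dominating set of $\Gamma(\ZZ,S)$ and $\dgamma(\ZZ,S)=1/5$.

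I do not expect any real obstacle here; the only step needing a little care is the bookkeeping in (iii) --- confirming $|S|=4$ (so that the variable ``$s$'' in the statement of Proposition~\ref{prop:cong} is $4$, not the element $s$ appearing in the present proposition), and writing down the indices $i_1,\dots,i_4$ uniformly for both congruence classes $s\equiv 2$ and $s\equiv 3\pmod 5$.
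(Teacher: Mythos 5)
Your proposal is correct and matches the paper's own (very terse) proof, which likewise derives (i) from Propositions~\ref{prop:cong} and~\ref{prop:divisor}, (ii) from Lemma~\ref{lem:subset} and Proposition~\ref{prop:efficient}, and (iii) from Proposition~\ref{prop:cong}. You have simply filled in the bookkeeping (the choice of the $i_j$'s and the check that $|S|=4$ in (iii)) that the paper leaves implicit.
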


\begin{proof}
Proposition~\ref{prop:cong} and \ref{prop:divisor} imply (i).
Lemma~\ref{lem:subset} and Proposition~\ref{prop:efficient} imply (ii).
Proposition~\ref{prop:cong} implies (iii).
\end{proof}

\section{Lemmas about blocks}\label{sec:block}

Let $S$ be a finite nonempty subset of $\ZZ\setminus\{0\}$.
A dominating set $D$ of $\Gamma(\ZZ,S)$ can be written as $D=\{ x_i: i\in\ZZ \}$, where $x_i< x_{i+1}$ for all $i\in\ZZ$, and it partitions $\ZZ$ into a disjoint union of \emph{blocks} $B_i:=\{ x_i, x_i+1,\ldots, x_{i+1}-1\}$ for all $i\in \ZZ$.
The size of a block $B_i$ is $b_i:=|B_i| = x_{i+1}-x_i$ and we say that $B_i$ is a $b_i$-block.
We define the \emph{block structure} of a union of consecutive blocks to be the sequence of sizes of the blocks in this union. 
We identify a dominating set $D$ with the block structure of $\ZZ$, since $D$ is determined by the block structure of $\ZZ$ up to a translation by an integer. 
When $D$ has period $p$, we can write $D$ as a finite sequence $(b_1,\ldots,b_\ell)$ of positive integers $b_1,\ldots,b_\ell$ with $b_1+\ldots+b_\ell=p$, which is repeated infinitely in both directions.
For example, the block structure $(2\ 3)^5\ 7\ (3\ 4)^2$ corresponds to 15 consecutive blocks, first with ten blocks alternating between 2-blocks and 3-blocks, then a 7-block, then four blocks alternating between 3-blocks and 4-blocks.
Repeating this block structure infinitely in both directions gives $((2\ 3)^5\ 7\ (3\ 4)^2)^\infty$, which determines a periodic dominating set up to a translation.

Assume $S=\{1,s\}$ for some $s\in\ZZ\setminus \{0,1\}$ below.
We use block structures to construct dominating sets for $\Gamma(\ZZ,\{1,s\})$ and show upper bounds for the domination ratio $\dgamma(\ZZ,\{1,s\})$.

\begin{lemma}\label{lem:1s}
{\normalfont(i)} If $s=3k+2$ for some integer $k$, then $\dgamma(1,s) = 1/3$.

{\normalfont(ii)} If $s=3k+1$ or $s=-3k$ for some integer $k\ge1$, then $\dgamma(\ZZ,\{1,s\}) \le (k+1)/(3k+2)$.

{\normalfont(iii)} If $s=3k$ or $s=-3k+1$ for some integer $k\ge1$, then $\dgamma(\ZZ,\{1,s\}) \le 2k/(6k-1)$. 
\end{lemma}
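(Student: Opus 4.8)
The plan is to establish each of the three upper bounds by writing down an explicit periodic dominating set of $\Gamma(\ZZ,\{1,s\})$ of the stated density, specified through its block structure; the density is then read off from Lemma~\ref{lem:period}. Recall that for $S=\{1,s\}$ a vertex $x$ dominates exactly $x$, $x+1$, and $x+s$. Hence if $D=\{x_i:i\in\ZZ\}$ with blocks $B_i=\{x_i,\dots,x_{i+1}-1\}$, then $D$ is dominating if and only if for every block $B_i$ and every $y\in B_i$ with $y\ge x_i+2$ the integer $y-s$ lies in $D$ (the two elements $x_i$ and $x_i+1$ are always dominated by $x_i$ itself, and a block of size $1$ or $2$ has no further elements). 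I will call these $y$ the \emph{far elements} of $B_i$, and the whole verification in each part reduces to checking, modulo the period, that every far element is of the form $x_\ell+s$ for some left endpoint $x_\ell$. Part (i) requires no new construction at all: since $1=0\cdot 3+1$ and $3k+2=k\cdot 3+2$, Proposition~\ref{prop:cong} applies with its ``$s$'' equal to $2$, so $3\ZZ$ is an efficient dominating set of $\Gamma(\ZZ,\{1,3k+2\})$ and $\dgamma(\ZZ,\{1,3k+2\})=1/3$.

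For part (ii) I would take the block structure $(2\ 3^k)^\infty$ — one $2$-block followed by $k$ consecutive $3$-blocks — of period $3k+2$ with $k+1$ blocks. Normalizing the $2$-block to $\{0,1\}$, the $3$-blocks become $\{3j-1,3j,3j+1\}$ for $1\le j\le k$, so $D\equiv\{0\}\cup\{3j-1:1\le j\le k\}\pmod{3k+2}$ and the far elements are precisely the integers $3j+1$, $1\le j\le k$. One then checks, separately for $s=3k+1$ and for $s=-3k$, that $(3j+1)-s$ is congruent modulo $3k+2$ either to $0$ (when $j=k$ and $s=3k+1$) or to some element of $\{3j-1:1\le j\le k\}$; this is a one-line residue computation in each case. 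Thus $D$ is dominating, and Lemma~\ref{lem:period} gives $\delta(D)=(k+1)/(3k+2)$, so $\dgamma(\ZZ,\{1,s\})\le(k+1)/(3k+2)$.

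For part (iii) I would take the block structure $(1\ 3^{k-1}\ 4\ 3^{k-1})^\infty$ — a $1$-block, then $k-1$ consecutive $3$-blocks, then a $4$-block, then another $k-1$ consecutive $3$-blocks — of period $6k-1$ with $2k$ blocks (for $k=1$ this is $(1\ 4)^\infty$). Normalizing the $1$-block to $\{0\}$, one computes $D\equiv\{0\}\cup\{3j-2:1\le j\le k\}\cup\{3m-1:k+1\le m\le 2k-1\}\pmod{6k-1}$, and the far elements split into three families: $\{3j:1\le j\le k-1\}$ from the first batch of $3$-blocks, $\{3k,3k+1\}$ from the $4$-block, and $\{3k+3j+1:1\le j\le k-1\}$ from the second batch. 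Using $6k\equiv1\pmod{6k-1}$, one checks for $s=3k$ and for $s=-3k+1$ that $y-s$ lands in $D$ for each far element $y$ in each of the three families; again this is just bookkeeping modulo $6k-1$. Hence $D$ is dominating and Lemma~\ref{lem:period} gives $\delta(D)=2k/(6k-1)$, as desired.

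The only real work — and the main obstacle — is the modular verification in parts (ii) and (iii) that every far element is reached by an $s$-edge from a left endpoint, which must be carried out for both signs of $s$ and, in (iii), must correctly track the asymmetry that the single $4$-block introduces into the residue pattern. Once the block structures above are written down, the rest is mechanical; no part of the argument requires the block-partition method used later for the matching lower bounds.
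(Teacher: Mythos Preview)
Your proposal is correct and follows essentially the same approach as the paper: part (i) is deduced from Proposition~\ref{prop:cong}, and parts (ii) and (iii) use the periodic dominating sets with block structures $(3^k\ 2)^\infty$ and $(3^{k-1}\ 4\ 3^{k-1}\ 1)^\infty$ (your versions are cyclic shifts of these, hence the same sets up to translation). The paper simply asserts ``one can check'' the domination property, whereas you spell out the modular bookkeeping, but there is no substantive difference in method.
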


\begin{proof}
(i) This is a special case of Proposition~\ref{prop:cong}, where we have a periodic dominating set determined by the block structure $3^\infty$.

(ii) One can check that $\Gamma(\ZZ,\{1,s\})$ has a periodic dominating set determined by $(3^k\ 2)^\infty$.
Since $D$ is periodic, it follows from Lemma~\ref{lem:period} that $\dgamma(\ZZ,\{1,s\}) \le \delta(D) = (k+1)/(3k+2)$.

(iii) Similarly to (ii), one can check that $\Gamma(\ZZ,\{1,s\})$ has a periodic dominating set determined by $(3^{k-1}\ 4\ 3^{k-1}\ 1)^\infty$.
By Lemma~\ref{lem:period}, $\dgamma(\ZZ,\{1,s\}) \le \delta(D) = 2k/(6k-1)$.
\end{proof}

We need the following lemma to establish the equalities in (ii) and (iii) of Lemma~\ref{lem:1s}.

\begin{lemma}\label{lem:block}
Let $D=\{x_i:i\in\ZZ\}$ be a dominating set of $\Gamma(\ZZ,\{1,s\})$. 
\begin{enumerate}
\item\label{item:size}
For each $i\in\ZZ$, the size of $B_i$ satisfies $1\le b_i\le s+1$ if $s>0$ or $1\le b_i\le -s+2$ if $s<0$.
\item\label{item:singleton}
If $i\in \ZZ$ and $b_i\ge3$, then $D$ contains $x_i-s+2, x_i-s+3,\ldots,x_i-s+b_i-1$.
\end{enumerate}
\end{lemma}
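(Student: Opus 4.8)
The plan is to read off, inside any block $B_i=\{x_i,x_i+1,\ldots,x_{i+1}-1\}$, exactly which vertices are forced to be dominated through the element $s$ of $S=\{1,s\}$, and then push this observation far enough to contradict the assumption that a block is too large. Recall that in $\Gamma(\ZZ,\{1,s\})$ a vertex $v$ is dominated by $D$ precisely when $v\in D$, $v-1\in D$, or $v-s\in D$. Within $B_i$ we have $D\cap B_i=\{x_i\}$; the vertex $x_i$ lies in $D$, and $x_i+1$ is dominated by $x_i$ since $1\in S$. Every remaining vertex of $B_i$ has the form $x_i+j$ with $2\le j\le b_i-1$, and both $x_i+j$ and its predecessor $x_i+j-1$ lie strictly between $x_i$ and $x_{i+1}$, hence neither is in $D$; so $x_i+j$ can only be dominated via $x_i+j-s\in D$. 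Letting $j$ run over $\{2,3,\ldots,b_i-1\}$ --- a nonempty range exactly when $b_i\ge 3$ --- gives $x_i-s+2, x_i-s+3,\ldots,x_i-s+b_i-1\in D$, which is \ref{item:singleton}. I would establish \ref{item:singleton} first and then deduce \ref{item:size} from it.

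For \ref{item:size}, the bound $b_i\ge 1$ is immediate from $x_i<x_{i+1}$. For the upper bound I would feed a well-chosen index $j$ into \ref{item:singleton} to produce a vertex that is simultaneously forced into $D$ and seen to lie in $B_i\setminus\{x_i\}$, hence not in $D$. Since $s\notin\{0,1\}$, in either sign case the block size assumed for contradiction is at least $3$, so \ref{item:singleton} is applicable. If $s>0$ and $b_i\ge s+2$, then $j=s+1$ lies in $\{2,\ldots,b_i-1\}$ and \ref{item:singleton} gives $x_i+(s+1)-s=x_i+1\in D$, contradicting $x_i+1\in B_i\setminus\{x_i\}$; hence $b_i\le s+1$. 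If $s<0$ and $b_i\ge -s+3$, then $j=2$ works and \ref{item:singleton} gives $x_i-s+2\in D$, but $-s+2\le b_i-1$ forces $x_i-s+2$ to lie strictly inside $B_i$, again a contradiction; hence $b_i\le -s+2$.

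I do not anticipate a genuine obstacle: the lemma is essentially a careful reading of the domination condition at the scale of a single block. The points that need care are the index bookkeeping at the ends of a block --- correctly singling out $x_i$ and $x_i+1$ as the vertices covered ``for free'' by $x_i$, and verifying that the predecessor of any interior vertex is again interior --- and the fact that the $s>0$ and $s<0$ cases of \ref{item:size} are closed off by different witnesses: for $s>0$ the forced vertex $x_i+1$ sits at the left end of $B_i$, whereas for $s<0$ it is the vertex $x_i-s+2$ lying deeper inside $B_i$. This asymmetry is precisely why the bound reads $-s+2$ rather than $-s+1$ when $s<0$: the unit step of $S$ shields one extra vertex on the relevant side.
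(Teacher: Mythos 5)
Your proposal is correct and follows essentially the same route as the paper: part \eqref{item:singleton} is obtained by noting that each interior vertex $x_i+j$ ($2\le j\le b_i-1$) and its predecessor both avoid $D$, forcing $x_i+j-s\in D$, and part \eqref{item:size} then follows because the forced elements cannot land in $B_i\setminus\{x_i\}$. The paper phrases the second step as a direct inequality on the extreme forced element rather than via your specific witness $j$, but the underlying argument is identical.
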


\begin{proof}
The result is trivial when $b_i\le 2$.
Assume $b_i\ge3$ below.
Then $D$ contains the elements $x_i-s+2, x_i-s+3,\ldots,x_i-s+b_i-1$ in order to dominate $x_i+2,x_i+3,\ldots,x_i+b_i-1$.
This implies $x_i-s+b_i-1\le x_i$ when $s>0$ or $x_i+b_i \le x_i-s+2$ when $s<0$.
The result follows.
\end{proof}

\section{Proof of Theorem~\ref{thm:1s}}\label{sec:cluster}

In this section we determine the domination ratio of the integer distance graph $\Gamma(\ZZ,\{1,s\})$ for any $s\in\ZZ\setminus\{0,1\}$.
The main idea is to merge blocks into coarser partitions of $\ZZ$.

\begin{lemma}\label{lem:toggle}
If $U\subseteq \ZZ$ and $f:\ZZ\to\RR$ satisfy all of the following conditions, then $\delta(U) = \delta(f)$.
\begin{enumerate}
\item
The set $\ZZ$ is the disjoint union of finite nonempty subsets $A_i$ for $i$ in some index set $I$.
\item\label{item:AiW}
There exists a constant $b$ such that $\max A_i-\min A_i\le b$ for all $i\in I$.
\item\label{item:AiU}
For each $i\in I$ we have $|U\cap A_i| = \|f(A_i)\|$. 
\item\label{item:f}
There exists a constant $N>0$ such that $0\le f(j)\le N$ for all $j\in\ZZ$.
\end{enumerate}
\end{lemma}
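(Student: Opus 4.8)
The plan is to show both densities agree by bounding partial sums over intervals $[-n,n]$ in terms of partial sums over unions of the $A_i$'s, using the bounded-diameter hypothesis \eqref{item:AiW} to control boundary effects and the uniform bound \eqref{item:f} on $f$ to make those boundary effects negligible after dividing by $2n+1$. First I would fix $n$ and let $I_n\subseteq I$ be the set of indices $i$ such that $A_i\cap[-n,n]\neq\emptyset$. By condition \eqref{item:AiW}, every $A_i$ with $i\in I_n$ is contained in the slightly enlarged interval $[-n-b,n+b]$, and conversely every $A_i$ lying entirely inside $[-n+b,n-b]$ belongs to $I_n$ and contributes fully. Writing $A_{I_n}:=\bigcup_{i\in I_n}A_i$, this gives the sandwich $[-n+b,n-b]\cap\ZZ \subseteq A_{I_n}\subseteq [-n-b,n+b]\cap\ZZ$ (after discarding the finitely many $A_i$ meeting the boundary, or rather accounting for them as an error term).

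Next I would use condition \eqref{item:AiU} to pass between $U$ and $f$: summing over $i\in I_n$ yields $|U\cap A_{I_n}| = \sum_{i\in I_n}|U\cap A_i| = \sum_{i\in I_n}\|f(A_i)\| = \|f(A_{I_n})\|$. The point is that the same index set $I_n$ controls both sides simultaneously. Then I compare $|U\cap[-n,n]|$ with $|U\cap A_{I_n}|$: these differ only on the symmetric difference of $[-n,n]$ and $A_{I_n}$, which is contained in $[-n-b,-n+b]\cup[n-b,n+b]$, a set of at most $4b+2$ integers; hence the two quantities differ by at most $4b+2$. Likewise $\|f([-n,n])\|$ and $\|f(A_{I_n})\|$ differ by at most $N(4b+2)$ using condition \eqref{item:f}. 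Combining, $\bigl|\,|U\cap[-n,n]| - \|f([-n,n])\|\,\bigr| \le (N+1)(4b+2)$, a constant independent of $n$.

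Finally, dividing by $2n+1$ and letting $n\to\infty$, the constant error washes out, so $\liminf_{n\to\infty}\frac{|U\cap[-n,n]|}{2n+1} = \liminf_{n\to\infty}\frac{\|f([-n,n])\|}{2n+1}$, which is exactly $\delta(U)=\delta(f)$ by the definitions recalled in Section~\ref{sec:density}. (Here I should note $f\ge 0$ from \eqref{item:f}, so $\delta(f)$ is well-behaved, and indeed the hypothesis lets me invoke Lemma~\ref{lem:density} if I prefer to phrase the boundary adjustments through the $\ell_m,r_m$ sequences rather than an explicit error bound.) The main obstacle I anticipate is purely bookkeeping: being careful that the index set $I_n$ is used consistently on the $U$-side and the $f$-side, and that the $A_i$ straddling the endpoints $\pm n$ are counted exactly once in the error term rather than double-counted — there are at most $2\lceil b\rceil+O(1)$ such straddling blocks at each end, each of size at most $b+1$ and each carrying $f$-mass at most $N(b+1)$, so the aggregate error is $O(Nb^2)$, still a constant. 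No genuinely hard analytic step is involved; the density definition via $\liminf$ is robust to additive constants, which is the whole content.
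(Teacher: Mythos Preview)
Your argument is correct and follows essentially the same route as the paper's proof: both pass to a union of whole blocks $A_i$ near $[-n,n]$, use condition \eqref{item:AiU} to equate the $U$-count and the $f$-sum on that union, and then bound the boundary discrepancy by a constant using \eqref{item:AiW} and \eqref{item:f} before dividing by $2n+1$. The only cosmetic difference is that the paper takes the inner approximation $X_n=\bigcup\{A_i:A_i\subseteq[-n,n]\}$ whereas you take the outer approximation $A_{I_n}=\bigcup\{A_i:A_i\cap[-n,n]\ne\emptyset\}\supseteq[-n,n]$; either choice yields an $O(1)$ error and the same conclusion.
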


\begin{proof}
Let $n$ be a sufficiently large integer.
Let $X_n$ be the union of all $A_i$ contained in $[-n,n]$.
We have $|U\cap X_n| = \|f(X_n)\|$ by \eqref{item:AiU}.
If $j\in\ZZ\cap [-n,n]\setminus X_n$, then $j$ is contained in some $A_i\not\subseteq[-n,n]$ and thus $j\in[-n,-n+b-1]\cup[n-b+1,n]$ by \eqref{item:AiW}.
Combining this with \eqref{item:f} gives
\[ 0\le |\left(U\cap[-n,n]\right)\setminus X_n| \le |\left(\ZZ\cap[-n,n]\right)\setminus X_n| \le 2b,\]
\[ 0\le \| f\left(\ZZ\cap[-n,n]\right)\setminus X_n\| \le 2bN.\]
It follows that
\[ \frac{|U\cap [-n,n] |}{2n+1} - \frac{2b}{2n+1} \le \frac{ \| f(\ZZ\cap[-n,n]) \|}{2n+1} \le  \frac{|U\cap [-n,n] |}{2n+1} + \frac{2bN}{2n+1}.\]
Taking limit inferior of the above bounds as $n\to\infty$ gives the result.
\end{proof}

\begin{proposition}\label{prop:3k+1}
Let $s=3k+1$ or $s=-3k$ for some integer $k\ge1$. 
Then $\dgamma(1,s)=(k+1)/(3k+2)$.
\end{proposition}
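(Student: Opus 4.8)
The plan is to prove the two inequalities $\dgamma(\ZZ,\{1,3k+1\})\le (k+1)/(3k+2)$ and $\dgamma(\ZZ,\{1,3k+1\})\ge (k+1)/(3k+2)$; the case $s=-3k$ then follows from the identity $\dgamma(\ZZ,\{1,s\})=\dgamma(\ZZ,\{1,1-s\})$. Indeed, writing the domination condition as $D\cup(D+1)\cup(D+s)=\ZZ$, the reflection $i\mapsto -i$ composed with a suitable translation is a density-preserving bijection from the dominating sets of $\Gamma(\ZZ,\{1,s\})$ onto those of $\Gamma(\ZZ,\{1,1-s\})$, and $1-(-3k)=3k+1$. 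The upper bound $\dgamma(\ZZ,\{1,3k+1\})\le (k+1)/(3k+2)$ is precisely Lemma~\ref{lem:1s}(ii), witnessed by the periodic dominating set with block structure $(3^k\ 2)^\infty$, so the substance lies in the lower bound.

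To bound $\delta(D)$ from below for an arbitrary dominating set $D=\{x_i:i\in\ZZ\}$ of $\Gamma(\ZZ,\{1,3k+1\})$, I would start from the two consequences of Lemma~\ref{lem:block}: each block $B_i$ has size $1\le b_i\le 3k+2$, and every block of size $b_i\ge 3$ \emph{forces} the $b_i-2$ consecutive integers $x_i-3k+1,\dots,x_i-3k+b_i-2$ to lie in $D$. From these I would extract two structural facts. First, the forced runs belonging to distinct blocks of size $\ge 3$ are pairwise disjoint: since $x_{i+1}-x_i=b_i$, the forced run of $B_{i'}$ (for $i'>i$) begins strictly after the forced run of $B_i$ ends; consequently a block of size $b\ge 4$ certifies $b-3$ distinct blocks of size $1$. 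Second, a maximal run of consecutive blocks of size exactly $3$ in the block sequence has length at most $k$: if $B_i,\dots,B_{i+r-1}$ all have size $3$ and $B_i=[x,x+2]$, then the $j$-th of these ($0\le j\le r-1$) forces $x+3j-3k+1=x+3(j-k)+1\in D$, and for $k\le j\le r-1$ this coincides with one of the omitted positions $x+3t+1$ $(0\le t\le r-1)$ of the run, a contradiction unless $r\le k$.

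The remaining step, which I expect to be the main obstacle, is to package these facts by merging the blocks into a coarser partition of $\ZZ$ into groups of size bounded in terms of $k$, together with a redistribution $f\colon\ZZ\to\RR_{\ge 0}$ that carries the $D$-mass of each forced run onto the block (at distance $\approx 3k$) that forces it, so that Lemma~\ref{lem:toggle} applies with $\delta(f)\ge(k+1)/(3k+2)$. Concretely this amounts to showing that for every long window of $N$ consecutive blocks one has $\sum_i\big((k+1)b_i-(3k+2)\big)\le o(N)$. Here bundling each block of size $b\ge 4$ with $b-3$ of the size-$1$ blocks it certifies gives, by the first structural fact, disjoint bundles of nonpositive total weight $-kb+3k+1\le 1-k\le 0$; blocks of size $1$ and $2$ carry negative weights $-2k-1$ and $-k$; and blocks of size $3$ carry weight $+1$. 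The second structural fact says that the size-$3$ blocks cluster into runs of length at most $k$ separated by other blocks, and this is what lets the $+1$'s be absorbed by the negative weights of the adjacent size-$1$ and size-$2$ blocks and by the surplus negativity of the bundles coming from blocks of size $\ge 4$. The extremal set $(3^k\ 2)^\infty$, with exactly $k$ size-$3$ blocks per size-$2$ block and nothing else, forces this absorption to be tight; the delicate point---where I expect the real work---is to keep the charge rates consistent when blocks of size $\ge 4$ are present and their certified size-$1$ blocks sit among runs of size-$3$ blocks. Once $\sum_i\big((k+1)b_i-(3k+2)\big)\le o(N)$ is established, taking the limit inferior over windows (with Lemma~\ref{lem:density} absorbing the boundary contributions) gives $\delta(D)\ge(k+1)/(3k+2)$; since $D$ was arbitrary, this yields $\dgamma(\ZZ,\{1,3k+1\})\ge(k+1)/(3k+2)$, completing the proof.
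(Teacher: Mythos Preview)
Your symmetry observation $\dgamma(\ZZ,\{1,s\})=\dgamma(\ZZ,\{1,1-s\})$ is correct and elegant; in fact the paper treats the two cases $s=3k+1$ and $s=-3k$ in parallel throughout, and in its concluding section explicitly asks for ``some more intuitive explanation for the equalities $\dgamma(\ZZ,\{1,3k+1\}) = \dgamma(\ZZ,\{1,-3k\})$''. Your reflection--translation argument supplies exactly that.

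However, the lower bound is incomplete, and you say so yourself. You set up a discharging scheme with block weight $w_i=(k+1)b_i-(3k+2)$, bundle each block of size $b\ge4$ with $b-3$ of its certified $1$-blocks (total weight $-k(b-3)+1\le 1-k$), and hope to absorb the $+1$ weights of size-$3$ blocks into neighboring non-$3$-blocks using the bound $r\le k$ on maximal runs. But a run of $k$ size-$3$ blocks bordered by a size-$4$ bundle of weight $1-k$ would leave a residual $+1$, and you have not shown how the global accounting closes in such configurations. The absorption rule, the direction in which charge is sent, and the verification that no charge is double-counted are all left unspecified; this is the heart of the lower bound and cannot be waved away. (It is recoverable---for instance, one can show that a maximal $3$-run immediately followed by a block of size $\ge4$ in fact has length at most $k-1$---but this and several similar refinements are exactly the ``real work'' you defer.)

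The paper's proof avoids these bookkeeping difficulties by a different packaging. It defines $f:\ZZ\to\RR_{\ge0}$ by setting $f(x_i)=1$ for $b_i\le3$, and for $b_i\ge4$ setting $f(x_i)=(b_i-1)/2$ while lowering $f$ to $1/2$ on the $b_i-3$ certified $1$-blocks; Lemma~\ref{lem:toggle} then gives $\delta(D)=\delta(f)$. Crucially, instead of discharging block-by-block, it partitions $\ZZ$ into \emph{clusters}: from the rightmost block $B_r$ in $(-\infty,n]$, take $C_r=B_r$ if $b_r\le2$, and $C_r=[x_r-3k+1,\,x_r+b_r-1]$ (an interval of length $3k+b_r-1$, beginning at a point of $D$) if $b_r\ge3$; then recurse on what remains to the left. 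Any such cluster contains at most $k$ blocks of size $\ge3$, and a short computation with the $f$-values gives $\|f(C_r)\|\ge(2k+b_r-1)/2$; one checks $(2k+b_r-1)/\bigl(2(3k+b_r-1)\bigr)\ge(k+1)/(3k+2)$, sums over clusters, and applies Lemma~\ref{lem:density}. The cluster device is what replaces the missing discharging rule in your sketch: grouping each block of size $\ge3$ with the $3k-1$ integers immediately to its left makes the ``$\le k$ large blocks per cluster'' bound do all the work, with no need to track which block absorbs which.
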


\begin{proof}
By Lemma~\ref{lem:1s}, it suffices to show $\delta(D) \ge(k+1)/(3k+2)$ for an arbitrary dominating set $D$ of the digraph $\Gamma(\ZZ,\{1,s\})$.
We may assume $D=\{x_i:i\in\ZZ\}$, where $x_i<x_{i+1}$ for all $i\in \ZZ$.
The set $D$ partitions $\ZZ$ into a disjoint union of blocks $B_i:=\{x_i,x_i+1,\ldots,x_{i+1}-1\}$ for all $i\in \ZZ$.
Each block size $b_i:=|B_i|$ is at most $3k+2$ by Lemma~\ref{lem:block}~\eqref{item:size}.

We want to define a function $f:\ZZ\to\RR$ and partition $\ZZ$ into a disjoint union of finite nonempty subsets $A_i$ for all $i$ in some index set $I$ in such a way that Lemma~\ref{lem:toggle} applies and gives $\delta(D)=\delta(f)$.

\vskip3pt\noindent\textsf{Step 1}. 
We first define $f(x) := 0$ for all $x\in \ZZ\setminus D$ and initiate $I:=\emptyset$.

\vskip3pt\noindent\textsf{Step 2}. 
For each $i\in\ZZ$ with $b_i\le 3$ we define $f(x_i):=1$, insert $i$ into $I$, and let $A_i:=B_i$; we have 
\[ \max A_i - \min A_i = b_i-1 \le2 \quad\text{and}\quad \|f(A_i)\| = 1 = |A_i\cap D|.\]
Since the blocks are pairwise disjoint, at the end of this step we have a disjoint union of the sets $A_i$ for all $i\in I$, which equals the union of all blocks of size at most $3$.

\vskip3pt\noindent\textsf{Step 3}. 
For each $i\in \ZZ$ with $4\le b_i\le 3k+2$ we insert $i$ into $I$ and define
\[ f(x_i) := (b_i-1)/2\le (3k+1)/2.\]
By Lemma~\ref{lem:block}~\eqref{item:singleton}, there are $b_i-3$ consecutive 1-blocks 
\[ B_j=\{ x_i-s+2 \},\ B_{j+1}=\{x_i-s+3\},\ \ldots,\ B_{j+b_i-4}=\{x_i-s+b_i-2\}.\]
Let $A_i$ be the union of these 1-blocks together with $B_i$.
Lemma~\ref{lem:block}~\eqref{item:size} implies
\[ \max A_i - \min A_i = \begin{cases}
(x_i+b_i-1) - (x_i-s+2) \le 2s-2=6k, & \text{if } s=3k+1, \\
(x_i-s+b_i-2) - x_i \le -2s=6k, & \text{if } s=-3k.
\end{cases}\]
Delete $j+h$ from $I$ and redefine $f(x_{j+h}):=1/2$ for all $h=0,1,\ldots,b_i-4$. We have
\[ \|f(A_i)\| = (b_i-3)/2 + (b_i-1)/2 = b_i-2 = |A_i\cap D|.\]
Since we delete any $1$-block included in a set $A_i$ defined in this step, in the end we still have a disjoint union of $A_i$ for all $i\in I$, and this union equals $\ZZ$ as we include all the blocks.

Now for every $i\in\ZZ$ the nonempty set $A_i$ satisfies
\[ \max A_i - \min A_i \le 6k \quad\text{and}\quad \|f(A_i)\| = |A_i\cap D|. \]
For each $x\in\ZZ$ we have 
$ 0\le f(x)\le (3k+1)/2.$
Thus Lemma~\ref{lem:toggle} gives $\delta(D) = \delta(f)$.

It remains to show $\delta(f)\ge (k+1)/(3k+2)$.
For each integer $n>0$, let $x_r$ be the largest element of $D$ such that $B_r\subseteq(-\infty,n]$ if $s=3k+1$, or the smallest element of $D$ such that $B_r\subseteq[-n,\infty)$ if $s=-3k$.
We distinguish two cases below to define a cluster $C_r$, which is a union of certain blocks.

\vskip3pt\noindent\textsf{Case 1}.
Suppose $b_r\le 2$.
Define a cluster $C_r:=B_r$.
We have either $|C_r|=1$ and $1/2\le \|f(C_r)\|\le 1$, or $|C_r|=2$ and $\|f(C_r)\|=1$.

\vskip3pt\noindent\textsf{Case 2}.
Suppose $3\le b_r\le 3k+2$.
Then $D$ must contain $x_r-s+2, x_r-s+3, \ldots,x_r-s+b_r-1$ in order to dominate $x_r+2,x_r+3,\ldots,x_r+b_r-1$.
Define a cluster 
\[ C_r:=\begin{cases}
[x_r-s+2,x_r+b_r-1], & \textrm{if } s=3k+1, \\
[x_r,x_r-s+b_r-2], & \textrm{if } s=-3k.
\end{cases} \]
Then $C_r$ has size $3k+b_r-1$ and is the disjoint union of blocks $B_i$ for all $x_i\in D\cap C_r$.
Let $m_\ell$ be the number of blocks of size $\ell$ in $C_r$ for $1\le\ell\le 3k+2$.
We have 
\[ m_1+2m_2+ 3m_3 + \cdots+(3k+2)m_{3k+2} = 3k+b_r-1. \]
This implies 
$m_3+\cdots + m_{3k+2} \le k.$
It follows that 
\begin{eqnarray*}
\|f(C_r)\| &\ge & \frac{m_1}2 + m_2 + m_3 + \sum_{4\le \ell \le 3k+2} \frac{\ell-1}2 m_\ell \\
&=& \frac12 \sum_{1\le \ell\le 3k+2} \ell m_\ell - \frac12 \sum_{3\le \ell \le 3k+2} m_\ell \\
&\ge& \frac{3k+b_r-1}{2} - \frac{k}2 = \frac{2k+b_r-1}2.
\end{eqnarray*}

Now we have the cluster $C_r$.
When $s=3k+1$ we recursively write $(-\infty,c-1]$ as a disjoint union of clusters, where $c$ is the smallest element of the cluster $C_r$.
When $s=-3k$ we recursively write $[c+1,\infty)$ as a disjoint union of clusters, where $c$ is the largest element of the cluster $C_r$.
Let $Z_n$ be the union of all clusters contained in $[-n,n]$.
Let $n_j$ be the number of clusters of size $j$ in $Z_n$.
Then 
\[ \|f(Z_n)\| \ge \frac{n_1}2+n_2+\sum_{3\le \ell\le 3k+2} \frac{2k+\ell-1}2 n_{3k+\ell-1}, \]
\[ |Z_n| = n_1 + 2n_2 + \sum_{3\le \ell\le 3k+2} (3k+\ell-1) n_{3k+\ell-1}. \]
It is clear that $1/2 \ge (k+1)/(3k+2)$. 
Moreover, for $3\le \ell\le 3k+2$ one can check that 
\[ \frac{2k+\ell-1}{2(3k+\ell-1)} \ge \frac{k+1}{3k+2}.\]
Thus
\[ \|f(Z_n)\| / |Z_n| \ge (k+1)/(3k+2). \]
Since all clusters are intervals of size at most $6k+1$, we have $Z_n=\ZZ\cap[-n+\ell_n,n-r_n]$, where $\ell_n,r_n\in[0,6k]$. 
By Lemma~\ref{lem:density}, we have
\[ \delta(f) = \liminf_{n\to\infty} \frac{\|f(Z_n)\|}{|Z_n|} \ge \frac{k+1}{3k+2}.\]
This completes the proof.
\end{proof}

\begin{remark}
If $1\in S$ then $\Gamma(\ZZ,S)$ has a Hamiltonian (directed) path $\cdots\to-2\to-1\to0\to1\to2\to\cdots$.
This is the only possible Hamiltonian path in $\Gamma(\ZZ,S)$ if the elements of $S$ are all positive.
On the other hand, if $S$ contains $1$ and another integer $s\le -2$, then there exist other Hamiltonian paths in $\Gamma(\ZZ,S)$. 
For example, $\cdots\to2\to3\to0\to1\to-2\to-1\to\cdots$ is a Hamiltonian path in $\Gamma(\ZZ,\{1,-3\})$.
Thus for each integer $k\ge1$, the graphs $\Gamma(\ZZ,\{1,3k+1\})$ and $\Gamma(\ZZ,\{1,-3k\})$ are not isomorphic, even though they have the same domination ratio by Proposition~\ref{prop:3k+1}.
\end{remark}

\begin{proposition}\label{prop:3k}
If $s=3k$ or $s=-3k+1$ for some integer $k\ge1$, then $\dgamma(\ZZ,\{1,s\}) = 2k/(6k-1)$.
\end{proposition}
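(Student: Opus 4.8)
\textbf{Proof plan for Proposition~\ref{prop:3k}.}
By Lemma~\ref{lem:1s}~(iii), the upper bound $\dgamma(\ZZ,\{1,s\})\le 2k/(6k-1)$ is already established (via the periodic dominating set $(3^{k-1}\,4\,3^{k-1}\,1)^\infty$), so it suffices to prove the matching lower bound $\delta(D)\ge 2k/(6k-1)$ for an arbitrary dominating set $D=\{x_i:i\in\ZZ\}$ of $\Gamma(\ZZ,\{1,s\})$. The plan is to follow the same template as the proof of Proposition~\ref{prop:3k+1}: first use Lemma~\ref{lem:toggle} to replace the counting function $\chi_D$ by a ``smoothed'' weight function $f$ with $\delta(D)=\delta(f)$, and then bound $\delta(f)$ from below by grouping blocks into clusters (intervals) whose $f$-weight-to-length ratio is always at least $2k/(6k-1)$, invoking Lemma~\ref{lem:density} at the end to pass from clusters back to the interval $[-n,n]$.

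For Step~1, I would define $f$ exactly as in Proposition~\ref{prop:3k+1}: put $f\equiv 0$ off $D$; for each block $B_i$ of size $b_i\le 3$ set $f(x_i):=1$ with $A_i:=B_i$; and for each block $B_i$ of size $4\le b_i\le -s+2$ (recall Lemma~\ref{lem:block}~\eqref{item:size} bounds block sizes by $s+1=3k+1$ when $s=3k$, or by $-s+2=3k+1$ when $s=-3k+1$), set $f(x_i):=(b_i-1)/2$, absorb into $A_i$ the $b_i-3$ singleton $1$-blocks that Lemma~\ref{lem:block}~\eqref{item:singleton} forces to sit at $x_i-s+2,\dots,x_i-s+b_i-2$, redefine $f$ on those singletons to be $1/2$, and remove their indices from $I$. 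As before one checks $\|f(A_i)\|=|A_i\cap D|$, each $A_i$ is an interval of bounded length (at most $2|s|\approx 6k$), and $0\le f\le (3k+1)/2$, so Lemma~\ref{lem:toggle} yields $\delta(D)=\delta(f)$.

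The heart of the argument is the cluster construction for the lower bound, and this is where the proof genuinely differs from Proposition~\ref{prop:3k+1}: the target ratio here is $2k/(6k-1)$ rather than $(k+1)/(3k+2)$, so the cluster bookkeeping must be sharper. I would again, starting from an appropriate boundary block $B_r$ near $\pm n$, form a cluster $C_r$ which is either $B_r$ itself (when $b_r\le 2$), or the interval spanned by $B_r$ together with the forced $1$-blocks when $b_r\ge 3$, so that $|C_r|=3k-1+b_r$ (one less than in the $s=3k+1$ case, since $|s|=3k$) and $C_r$ is a disjoint union of blocks. Writing $m_\ell$ for the number of $\ell$-blocks in $C_r$, the size equation $\sum_\ell \ell m_\ell = 3k-1+b_r$ forces $\sum_{\ell\ge 3} m_\ell \le k$ (here one uses that every block of size $\ge 4$ in $C_r$ also contributes several of the $1$-blocks counted in $m_1$, and more carefully that a single big block ``uses up'' enough length); the resulting estimate should give $\|f(C_r)\|\ge (2k-2+b_r)/2$ or a comparable expression. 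Then, as in the previous proof, I would decompose the far side of $C_r$ recursively into clusters, let $Z_n$ be the union of clusters inside $[-n,n]$, sum the weight and length bounds over cluster sizes, and verify the two elementary inequalities $1/2\ge 2k/(6k-1)$ and, for the relevant range of $\ell$, $(2k-2+\ell)/\big(2(3k-1+\ell)\big)\ge 2k/(6k-1)$ — the second being the decisive one, tight at the $4$-block / $1$-block balance that the extremal pattern $(3^{k-1}\,4\,3^{k-1}\,1)^\infty$ realizes. Finally Lemma~\ref{lem:density} (with $\ell_n,r_n$ bounded by the maximal cluster length) converts $\|f(Z_n)\|/|Z_n|\ge 2k/(6k-1)$ into $\delta(f)\ge 2k/(6k-1)$, completing the proof.

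\textbf{Main obstacle.} I expect the delicate point to be getting the cluster weight bound with the \emph{correct} constant: because $2k/(6k-1)$ is strictly larger than $1/3$ but only barely, the slack in the inequality $\sum_{\ell\ge 3}m_\ell\le k$ must be accounted for precisely, and one must make sure the cluster containing an isolated large block (size close to $3k+1$) together with all its forced $1$-blocks still has ratio at least $2k/(6k-1)$ — this is exactly the configuration where the extremal density is attained, so the inequality will be an equality there and leaves no room for a lossy estimate. Handling the two sign cases $s=3k$ and $s=-3k+1$ uniformly (the clusters extend in opposite directions, and the forced $1$-blocks sit on opposite sides of $B_r$) is routine but must be done carefully, mirroring the case split already present in Proposition~\ref{prop:3k+1}.
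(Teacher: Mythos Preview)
Your plan follows the right template, but it has a genuine gap: with the \emph{same} weight function $f$ as in Proposition~\ref{prop:3k+1} (namely $f=1/2$ on absorbed singletons and $f(x_i)=(b_i-1)/2$ on large blocks), the ``decisive inequality'' you write down is simply false at $b_r=3$. With the correct cluster size $|C_r|=3k+b_r-2$ (not $3k-1+b_r$) and the bound $\|f(C_r)\|\ge (2k+b_r-2)/2$ coming from $\sum_{\ell\ge 3}m_\ell\le k$, the ratio test reads
\[
\frac{2k+b_r-2}{2(3k+b_r-2)}\ \ge\ \frac{2k}{6k-1},
\]
which is equivalent to $2k(b_r-3)\ge b_r-2$; this fails by exactly $1$ when $b_r=3$. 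Concretely, for $s=6$ ($k=2$) a cluster $[x_r-4,x_r+2]$ consisting of a $4$-block followed by the $3$-block $B_r$ has $\|f(C_r)\|=3/2+1=5/2$ and size $7$, giving ratio $5/14<4/11=2k/(6k-1)$. So the cluster accounting cannot close with this $f$.

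The paper repairs this in two ways that your plan misses. First, it uses a different weight function: absorbed singletons get $3k/(6k-1)$ (not $1/2$), and a block of size $b_i\ge 4$ gets $f(x_i)=\big(b_i(3k-1)+2-3k\big)/(6k-1)$; one checks $\|f(A_i)\|=|A_i\cap D|$ still holds. Second, and crucially, even with this sharper $f$ the single-stage cluster $C'_r$ of size $3k+b_r-2$ is only good enough when it contains at least one block of size $\le 2$ (Case~2.1). When $m_1+m_2=0$ one is forced to have $b_r=3$, and the paper then \emph{extends} the cluster a second step to an interval of length $6k-1$ (Case~2.2), using Lemma~\ref{lem:block}\,\eqref{item:singleton} again on a block of size $\ge 4$ inside $C'_r$ to guarantee $t_1\ge 1$ in the extended cluster; only then does the ratio bound $\|f(C_r)\|/|C_r|\ge 2k/(6k-1)$ go through. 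Your proposal anticipates neither the modified $f$ nor the two-level cluster, and without at least one of these ingredients the lower bound argument does not close.
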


\begin{proof}
This result is proved in a similar way as Proposition~\ref{prop:3k+1}.
By Lemma~\ref{lem:1s}, it suffices to show $\delta(D) \ge 2k/(6k-1)$ for an arbitrary dominating set $D=\{x_i:i\in\ZZ\}$ of the digraph $\Gamma(\ZZ,\{1,s\})$.
The set $D$ partitions $\ZZ$ into a disjoint union of blocks $B_i$ for all $i\in \ZZ$ and each block size $b_i:=|B_i|$ is at most $3k+1$ by Lemma~\ref{lem:block}~\eqref{item:size}.
We want to define a function $f:\ZZ\to\RR$ and partition $\ZZ$ into a disjoint union of finite nonempty subsets $A_i$ for all $i$ in some index set $I$ in such a way that Lemma~\ref{lem:toggle} applies and gives $\delta(D)=\delta(f)$.

\vskip3pt\noindent\textsf{Step 1}. 
We first define $f(x):=0$ for all $x\in \ZZ\setminus D$ and initiate $I:=\emptyset$.

\vskip3pt\noindent\textsf{Step 2}. 
For each $i\in\ZZ$ with $b_i\le 3$ we define $f(x_i):=1$, insert $i$ into $I$, and set $A_i:=B_i$; we have 
\[ \max A_i - \min A_i = b_i-1 \le 2 \quad\text{and}\quad \|f(A_i)\| = 1 = |A_i\cap D|.\]

\vskip3pt\noindent\textsf{Step 3}. 
For each $i\in\ZZ$ with $4\le b_i\le 3k+1$ we insert $i$ into $I$ and define
\[ f(x_i) := \frac{b_i(3k-1) + 2-3k}{6k-1} \le \frac{(3k+1)(3k-1)+2-3k}{6k-1} = \frac{9k^2-3k+1}{6k-1}\le 3k/2.\]
By Lemma~\ref{lem:block}~\eqref{item:singleton}, there are $b_i-3$ consecutive 1-blocks 
\[ B_j:=\{x_i-s+2\}, B_{j+1}:= \{x_i-s+3\}, \ldots, B_{j+b_i-4} := \{x_i-s+b_i-2\}.\]
Let $A_i$ be the union of these 1-blocks together with $B_i$.
Then
\[ \max A_i - \min A_i = 
\begin{cases}
(x_i+b_i-1) - (x_i-s+2) \le 2s-2=6k-2 , & \text{if } s = 3k,\\
(x_i-s+b_i-2) - x_i \le -2s=6k-2, & \text{if } s = -3k+1.
\end{cases} \]
Delete $j+h$ from $I$ and redefine $f(x_{j+h}):=3k/(6k-1)$ for $h=0,1,\ldots,b_i-4$.
We have
\[ \|f(A_i)\| = \frac{(b_i-3)3k + b_i(3k-1) + 2-3k}{6k-1} = b_i-2 = |A_i\cap D|.\]

One sees that $\ZZ$ is the disjoint union of nonempty subsets $A_i$ with 
\[ \max A_i - \min A_i \le 6k-2 \quad\text{and}\quad \|f(A_i)\| = |A_i\cap D| \]
for all $i\in I$.
For each $x\in\ZZ$ we have 
$0\le f(x)\le 3k/2.$
Thus Lemma~\ref{lem:toggle} gives $\delta(D) = \delta(f)$.

It remains to show $\delta(f)\ge 2k/(6k+1)$.
For each integer $n>0$, let $x_r$ be the largest element of $D$ such that $B_r\subseteq(-\infty,n]$ if $s=3k$, or the smallest element of $D$ such that $B_r\subseteq[-n,\infty)$ if $s=-3k+1$.
We distinguish two cases below to define a cluster $C_r$.

\vskip3pt\noindent\textsf{Case 1}. 
Suppose $b_r\le 2$.
Define a cluster $C_r:=B_r$.
Then we have either $|C_r|=1$ and $3k/(6k-1)\le \|f(C_r)\|\le 1$, or $|C_r|=2$ and $\|f(C_r)\|=1$.

\vskip3pt\noindent\textsf{Case 2}. 
Suppose $3\le b_r\le 3k+1$.
Then $D$ contains $x_r-s+2,x_r-s+3,\ldots,x_r-s+b_r-1$ in order to dominate $x_r+2,x_r+3,\ldots,x+b_r-1$.
We define
\[ C'_r := \begin{cases}
[x_r-s+2,x_r+b_r-1], & \text{if } s=3k, \\
[x_r,x_r-s+b_r-2], & \text{if } s=-3k+1.
\end{cases}\]
Then $C'_r$ has size $3k+b_r-2$ and is the disjoint union of $B_i$ for all $i\in D\cap C'_r$.
Let $m_\ell$ be the number of blocks of size $\ell$ contained in $C'_r$ for $1\le \ell\le 3k+1$.
Then 
\[ m_1+2m_2+ 3m_3 + \cdots+(3k+1)m_{3k+1} = 3k+b_r-2. \]
This implies $m_3+\cdots + m_{3k+1} \le k.$
To define a cluster $C_r$ we further distinguish two subcases.

\vskip3pt\noindent\textsf{Case 2.1}. 
Suppose $m_1+m_2\ge1$.
Define a cluster $C_r:=C'_r$ which has size $3k+b_r-2$ and satisfies
\begin{eqnarray*}
\|f(C_r)\| &\ge& \frac{3k}{6k-1}m_1 + m_2 + m_3 + \sum_{4\le \ell\le 3k+1} \frac{\ell(3k-1)+2-3k}{6k-1}m_\ell \\
& = & \sum_{1\le\ell\le 3k+1} \frac{3k-1}{6k-1} \ell m_\ell + \frac{m_1+m_2}{6k-1} - \sum_{3\le\ell\le 3k+1} \frac{3k-2}{6k-1} m_\ell \\
& \ge & \frac{(3k-1)(3k+b_r-2)}{6k-1} + \frac{1}{6k-1} - \frac{(3k-2)k}{6k-1} \\
& = & \frac{6k^2+(3b_r-7)k-b_r+3}{6k-1}.
\end{eqnarray*}

\vskip3pt\noindent\textsf{Case 2.2}. 
Suppose $m_1+m_2=0$.
Then we have $b_r=3$ since $\{x_r-s+2\}$ is a 1-block when $b_r\ge4$. 
Define a cluster 
\[ C_r := \begin{cases}
[x_r-2s+4,x_r+2], & \text{if } s=3k,\\
[x_r,x_r-2s], & \text{if } s=-3k+1.
\end{cases}\]
Then $C_r$ has size $6k-1$ and is the disjoint union of $B_i$ for all $i\in D\cap C_r$ by the following argument.
\begin{itemize}
\item
If $s=3k$ then the block containing $x_r-s+2$ (the smallest element of $C'_r$) has size at least $3$ since $m_1+m_2=0$, and thus $D$ contains $x_r-2s+4$ in order to dominate $x_r-s+4$.
\item
If $s=-3k+1$ then the block containing $x_r-s+1$ (the largest element of $C'_r$) has size at least $3$ since $m_1+m_2=0$, and thus $D$ contains $x_r-2s+1$ to dominate $x_r-s+1$.
\end{itemize}
Let $t_\ell$ be the number of blocks of size $\ell$ contained in $C_r$ for $1\le \ell \le 3k+1$.
Then 
\[ t_1 + 2t_2 + 3t_3+\cdots+(3k+1)t_{3k+1} = 6k-1. \]
This implies $t_3+t_4+\cdots + t_{3k+1} \le 2k-1$.
Since $C'_r$ has size $3k+1$ and does not contain any block of size one or two, it must contain a block $B_j$ of size four or larger.
Then $D$ must contain $x_j-s+2$ and $x_j-s+3$ in order to dominate $x_j+2$ and $x_j+3$.
By the definition of $C_r$, we have $x_j-s+2\in C_r$ and thus $t_1\ge1$.
It follows that
\begin{eqnarray*}
\|f(C_r)\| &\ge & \frac{3k}{6k-1}t_1 + t_2 + t_3 + \sum_{4\le \ell\le 3k+1} \frac{\ell(3k-1)+2-3k}{6k-1}t_\ell \\
& = & \sum_{1\le\ell\le 3k+1} \frac{3k-1}{6k-1} \ell t_\ell + \frac{t_1+t_2}{6k-1} - \sum_{3\le\ell\le 3k+1} \frac{3k-2}{6k-1} t_\ell \\
& \ge & \frac{(3k-1)(6k-1)}{6k-1} + \frac{1}{6k-1} - \frac{(3k-2)(2k-1)}{6k-1} \\
&=& \frac{12k^2-2k}{6k-1}=2k.
\end{eqnarray*}

Now we have the cluster $C_r$.
When $s=3k$ we recursively write $(-\infty,c-1]$ as a disjoint union of clusters, where $c$ is the smallest element of $C_r$.
When $s=-3k+1$ we recursively write $[c+1,\infty)$ as a disjoint union of clusters, where $c$ is the largest element of $C_r$.
Let $Z_n$ be the union of all clusters contained in $[-n,n]$.
Let $n_j$ be the number of clusters of size $j$ in $Z_n$ defined in Case 1 and Case 2.1  for $j\in\{1,2\}\cup\{3k+\ell-2: 3\le \ell\le 3k+1\}$.
Let $n'_{6k-1}$ be the number of clusters in $Z_n$ defined in Case 2.2.
We have 
\[ \|f(Z_n)\| \ge \frac{3kn_1}{6k-1}+n_2+\sum_{3\le \ell\le 3k+1} \frac{6k^2+(3\ell-7)k-\ell+3}{6k-1} n_{3k+\ell-2}+ 2k n'_{6k-1}, \]
\[ |Z_n| =  n_1 + 2n_2 + \sum_{3\le \ell\le 3k+1} (3k+\ell-2) n_{3k+\ell-2} + (6k-1)n'_{6k-1}. \]
For $k\ge1$ and $3\le\ell\le 3k+1$ one can check that 
\[ \frac{3k}{6k-1} \ge \frac{2k}{6k-1}, \quad \frac12\ge \frac{2k}{6k-1},
\quad\text{and}\quad \frac{6k^2+(3\ell-7)k-\ell+3}{(6k-1)(3k+\ell-2)} \ge \frac{2k}{6k-1}. \]
Thus
\[ \|f(Z_n)\| / |Z_n| \ge 2k/(6k-1). \]
Since all clusters are intervals of size at most $6k-1$, we have $Z_n=\ZZ\cap[-n+\ell_n,n-r_n]$, where $\ell_n,r_n\in[0,6k-2]$. 
It follows from Lemma~\ref{lem:density} that
\[ \delta(f) = \liminf_{n\to\infty} \frac{\|f(Z_n)\|}{|Z_n|} \ge \frac{2k}{6k-1}.\]
This completes the proof.
\end{proof}

\begin{corollary}
Let $s$ and $t$ be two distinct elements of $\ZZ\setminus\{0,1\}$ such that $s\mid t$.
Then there exists an efficient dominating set for $\Gamma(\ZZ,\{s,t\})$ if and only if $t/s\equiv2\pmod 3$.
\end{corollary}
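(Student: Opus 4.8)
The plan is to reduce the problem to the case $S=\{1,q\}$ and then combine Theorem~\ref{thm:1s} with Propositions~\ref{prop:efficient} and~\ref{prop:cong}. Set $q:=t/s$; since $s\neq t$ and $t\neq 0$ we have $q\in\ZZ\setminus\{0,1\}$, and the hypothesis $t/s\equiv 2\pmod 3$ is the same as $q\equiv 2\pmod 3$. First I would note that $|s|$ is a positive common divisor of $\{s,t\}$, so the isomorphism constructed in the proof of Proposition~\ref{prop:divisor} (dividing by $|s|$), composed when $s<0$ with the isomorphism $i\mapsto-i$ from the proof of Proposition~\ref{prop:neg}, exhibits $\Gamma(\ZZ,\{s,t\})$ as a disjoint union of $|s|$ copies of $\Gamma(\ZZ,\{1,q\})$ with no edges between distinct copies. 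A digraph isomorphism carries efficient dominating sets to efficient dominating sets, and an efficient dominating set of such a disjoint union restricts to one on each copy (conversely, copying an efficient dominating set of $\Gamma(\ZZ,\{1,q\})$ into every copy gives one for the union). Hence $\Gamma(\ZZ,\{s,t\})$ has an efficient dominating set if and only if $\Gamma(\ZZ,\{1,q\})$ does, so it suffices to prove the claim for $S=\{1,q\}$ with $q\in\ZZ\setminus\{0,1\}$.

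For the ``if'' direction, suppose $q\equiv 2\pmod 3$, say $q=3k+2$ with $k\in\ZZ$. Then $\{1,q\}=\{0\cdot 3+1,\ k\cdot 3+2\}$ has precisely the form required by Proposition~\ref{prop:cong} (with $|S|=2$, $i_1=0$, $i_2=k$), so $3\ZZ$ is an efficient dominating set of $\Gamma(\ZZ,\{1,q\})$.

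For the ``only if'' direction, suppose $q\not\equiv 2\pmod 3$, so $q\equiv 0$ or $1\pmod 3$. A short case check on the residue and sign of $q$ shows that exactly one of $q=3k+1$, $q=-3k$, $q=3k$, $q=-3k+1$ with $k\ge 1$ holds, and in each of these cases Theorem~\ref{thm:1s}(ii) or (iii) gives $\dgamma(\ZZ,\{1,q\})\in\{(k+1)/(3k+2),\ 2k/(6k-1)\}$, each of which is strictly greater than $1/3$. But Proposition~\ref{prop:efficient} forces $\dgamma(\ZZ,\{1,q\})=1/(|\{1,q\}|+1)=1/3$ whenever $\Gamma(\ZZ,\{1,q\})$ has an efficient dominating set, a contradiction; hence no efficient dominating set exists. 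I do not expect a genuine obstacle here: the only points needing care are the sign bookkeeping in the reduction (so that $\{s,t\}$ turns into $\{1,q\}$ rather than $\{-1,-q\}$ or $\{1,-q\}$) and checking that the four forms in Theorem~\ref{thm:1s}(ii),(iii) exhaust all $q\in\ZZ\setminus\{0,1\}$ with $q\not\equiv 2\pmod 3$, both of which are routine.
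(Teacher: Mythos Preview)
Your proposal is correct and follows essentially the same route as the paper's proof: reduce via Proposition~\ref{prop:divisor} to $\Gamma(\ZZ,\{1,q\})$, construct the efficient dominating set from Proposition~\ref{prop:cong} when $q\equiv 2\pmod 3$, and otherwise invoke Theorem~\ref{thm:1s}(ii),(iii) together with Proposition~\ref{prop:efficient} to rule out an efficient dominating set. If anything, your treatment of the sign of $s$ (using $|s|$ and the negation isomorphism) is more careful than the paper's, which writes $\Gamma_0,\ldots,\Gamma_{s-1}$ without comment.
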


\begin{proof}
By the proof of Proposition~\ref{prop:divisor}, $\Gamma(\ZZ,\{s,t\})$ is the disjoint union of subgraphs $\Gamma_0,\ldots,\Gamma_{s-1}$, which are all isomorphic to $\Gamma(\ZZ,\{1,t/s\})$.
First assume $t/s \equiv2\pmod 3$.
Then $\Gamma(\ZZ,\{1,t/s \})$ has an efficient dominating set by the proof of Lemma~\ref{lem:1s} (i).
Thus $\Gamma(\ZZ,\{s,t\})$ has an efficient dominating set equal to the union of efficient dominating sets of $\Gamma_0,\ldots,\Gamma_{s-1}$.

Now assume $t/s \not\equiv 2\pmod 3$.
We have $\dgamma(\ZZ,\{s,t\}) = \dgamma(\ZZ,\{1,t/s\}) > 1/3$ by Proposition~\ref{prop:divisor}, Proposition~\ref{prop:3k+1}, and Proposition~\ref{prop:3k}.
Thus Proposition~\ref{prop:efficient} implies that there exists no efficient dominating set for $\Gamma(\ZZ,\{s,t\})$.
\end{proof}

\begin{corollary}\label{cor:finite}
Let $k$ be a positive integer.
We have $\gamma(\ZZ_{3k+2},\{\pm1\}) = \gamma(\ZZ_{3k+2},\{1,2\})=k+1$ and $\gamma(\ZZ_{6k-1},\{1,3k\}) =2k$.
\end{corollary}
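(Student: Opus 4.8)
The plan is to deduce all three equalities from Proposition~\ref{prop:finite}, which converts a periodic dominating set of $\Gamma(\ZZ,S)$ that realizes the domination ratio into a minimum dominating set of the circulant digraph $\Gamma(\ZZ_p,S_p)$. For each target circulant I will pick a suitable $S=\{1,s\}$, use the explicit periodic dominating set written down in the proof of Lemma~\ref{lem:1s}, record its minimal period $p$ and its number of points per period, cite Theorem~\ref{thm:1s} (that is, Propositions~\ref{prop:3k+1} and~\ref{prop:3k}) to confirm that this set actually achieves $\dgamma(\ZZ,S)$, and finally reduce $S$ modulo $p$ to see that $S_p$ is the desired connection set.

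First I would treat $\gamma(\ZZ_{3k+2},\{\pm1\})$: take $S=\{1,3k+1\}$ and let $D$ be the dominating set with block structure $(3^k\,2)^\infty$ from Lemma~\ref{lem:1s}(ii). It has period $p=3k+2$ and $|D\cap[1,p]|=k+1$, and by Lemma~\ref{lem:period} and Proposition~\ref{prop:3k+1} we get $\delta(D)=(k+1)/(3k+2)=\dgamma(\ZZ,S)$, so Proposition~\ref{prop:finite} yields $\gamma(\ZZ_{3k+2},S_{3k+2})=k+1$; since $3k+1\equiv-1\pmod{3k+2}$ we have $S_{3k+2}=\{\pm1\}$. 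Next, for $\gamma(\ZZ_{3k+2},\{1,2\})$ I would run the identical argument with $S=\{1,-3k\}$: Lemma~\ref{lem:1s}(ii) again supplies the dominating set $(3^k\,2)^\infty$ with period $3k+2$ and $k+1$ points per period, Proposition~\ref{prop:3k+1} gives $\dgamma(\ZZ,S)=(k+1)/(3k+2)$, and $-3k\equiv2\pmod{3k+2}$ forces $S_{3k+2}=\{1,2\}$, so $\gamma(\ZZ_{3k+2},\{1,2\})=k+1$. Finally, for $\gamma(\ZZ_{6k-1},\{1,3k\})$ I would take $S=\{1,3k\}$ and let $D$ be the dominating set with block structure $(3^{k-1}\,4\,3^{k-1}\,1)^\infty$ from Lemma~\ref{lem:1s}(iii); its block sizes sum to $6k-1$ and there are $2k$ of them, so $p=6k-1$ and $|D\cap[1,p]|=2k$, and by Lemma~\ref{lem:period} and Proposition~\ref{prop:3k} we get $\delta(D)=2k/(6k-1)=\dgamma(\ZZ,S)$; since $0<3k<6k-1$ for $k\ge1$ the reduction modulo $p$ gives $S_{6k-1}=\{1,3k\}$, whence $\gamma(\ZZ_{6k-1},\{1,3k\})=2k$.

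I do not anticipate a genuine obstacle: the argument is just a repackaging of Proposition~\ref{prop:finite} and Theorem~\ref{thm:1s}. The only places that require attention are the elementary checks that the quoted periodic sets really have the claimed minimal periods and point counts, and that the congruences $3k+1\equiv-1$ and $-3k\equiv2$ modulo $3k+2$, together with $0<3k<6k-1$, correctly identify $S_p$ with the target connection set in each of the three cases; all of these are immediate for $k\ge1$.
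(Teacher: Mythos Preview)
Your argument is correct and follows exactly the route the paper intends: the paper's own proof is the one-line ``This follows from Theorem~\ref{thm:1s}, Proposition~\ref{prop:finite}, and the proof of Lemma~\ref{lem:1s},'' and you have simply unpacked those citations, chosen the appropriate $S=\{1,s\}$ in each case, and checked the residues and periods. Nothing further is needed.
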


\begin{proof}
This follows from Theorem~\ref{thm:1s}, Proposition~\ref{prop:finite}, and the proof of Lemma~\ref{lem:1s}.
\end{proof}

\section{Conclusion}

The domination number of a circulant (di)graph $\Gamma(\ZZ_n,S)$ with $S\subseteq \ZZ_n$ has been examined in various cases.
In this paper we initiate the study of the domination ratio $\gamma(\ZZ,S)$ of an integer distance (di)graph $\Gamma(\ZZ,S)$ with $S\subseteq \ZZ$, which is a natural infinite extension of $\Gamma(\ZZ_n,S)$.
This is also related the integer tiling problem as mentioned in the end of Section~\ref{sec:intro}.
We show that the domination ratio $\dgamma(\ZZ,S)$ can always be achieved by a periodic dominating set (Proposition~\ref{prop:periodic}), extending a similar result of Carraher, Galvin, Hartke, Radcliffe, and Stolee~\cite{IndepRatio} on the independence ratio.
We also provide some basic results on the domination ratio $\gamma(\ZZ,S)$ (Proposition~\ref{prop:basic}).
Our main result (Theorem~\ref{thm:1s}) gives the exact value of $\dgamma(\ZZ,S)$ when $S$ consists of two distinct nonzero integers $s$ and $t$ with $s\mid t$. 
Our proof of this new result is different from the discharging method used in earlier work~\cite{IndepRatio} on the independence ratio; in particular, we do not need the aforementioned periodicity.
Our result implies the domination number of certain circulant graphs (Corollary~\ref{cor:finite}), and also suggests that when $n$ is large and $s$ is close to $n/2$, the circulant graph $\Gamma(\ZZ_n,\{1,s\})$ should have its domination number close to the upper bound $\lceil n/3 \rceil$ given by Proposition~\ref{prop:eff} (ii).
To further extend our result, one could investigate the domination ratio $\gamma(\ZZ,S)$ at least in the following cases.
\begin{itemize}
\item 
The set $S$ consists of two nonzero integers $s$ and $t$ with $s\nmid t$.
\item 
The set $S$ consists of three nonzero integers $1$, $s$, and $t$. 
\item
The set $S$ consists of four nonzero integers and satisfies $S=-S$, so that $\Gamma(\ZZ,S)$ can be viewed as a $2$-regular undirected graph.
\end{itemize}

Finally, we ask a question on Theorem~\ref{thm:1s}. 
In our proof of this theorem, the construction of a dominating set to achieve the domination ratio $\dgamma(\ZZ,\{1,3k+1\})=(k+1)/(3k+2)$ (or $\dgamma(\ZZ,\{1,3k\}) =2k/(6k-1)$, resp.) and the argument to show that this ratio is minimum are the same as for $\dgamma(\ZZ,\{1,-3k\})=(k+1)/(3k+2)$ (or $\dgamma(\ZZ,\{1,-3k+1\})=2k/(6k-1)$, resp.).
We suspect that there is some more intuitive explanation for the equalities $\dgamma(\ZZ,\{1,3k+1\}) = \dgamma(\ZZ,\{1,-3k\})$ and $\dgamma(\ZZ,\{1,3k\})=\dgamma(\ZZ,\{1,-3k+1\})$.
The methods used to study integer tiling might be helpful for this question.

\section*{Acknowledgements}

The author is grateful to the anonymous referees for providing many valuable constructive suggestions on this paper and for pointing out the connection to the integer tiling problem.

\end{document}